\numberwithin{equation}{section}
\newcommand{\Z}{\mathbb{Z}}
\newtheorem{thm}{Theorem}[section]
\newtheorem{corollary}[thm]{Corollary}
\newtheorem{pro}[thm]{Proposition}
\newtheorem{proposition}[thm]{Proposition}
\newtheorem{lem}[thm]{Lemma}
\newtheorem{lemma}[thm]{Lemma}
\theoremstyle{definition}
\newtheorem{rem}[thm]{Remark}
\numberwithin{equation}{section}
\theoremstyle{definition}
\newtheorem*{ack}{Acknowledgements}
\newcommand{\ben}{\begin{enumerate}}
\newcommand{\een}{\end{enumerate}}
\newcommand{\eit}{\begin{itemize}}
\newcommand{\beq}{\begin{equation}}
\newcommand{\eeq}{\end{equation}}
\newcommand{\Fs}{\mathbb{F}_q^{\times}}
\renewcommand{\leq}{\leqslant}
\renewcommand{\geq}{\geqslant}
\newcommand{\X}{\mathbb{X}}
\newcommand{\Y}{\mathbb{Y}}
\newcommand{\Yn}{\Y(n)}
\newcommand{\ex}{\mathbb{E}}
\newcommand{\E}{\mathcal{E}}
\newcommand{\re}{\textup{Re}}
\newcommand{\pr}{\mathbb{P}}
\newcommand{\f}{f_{\Y}}
\newcommand{\fn}{f_{\Yn}}
\newcommand{\sumh}{\sideset{}{^h}\sum}
\begin{document}

\baselineskip=17pt

\title{Large deviations of sums of random variables}

\dedicatory{To the memory of Jonas Kubilius}

\author{ Andrew Granville}
\address{D{\'e}partment  de Math{\'e}matiques et Statistique,   Universit{\'e} de Montr{\'e}al, CP 6128 succ Centre-Ville, Montr{\'e}al, QC  H3C 3J7, Canada; and  Department of Mathematics, University College London, Gower Street, London WC1E 6BT, England.}
   \email{andrew.granville@umontreal.ca}  

\author{Youness Lamzouri}
\address{Institut \'Elie Cartan de Lorraine, Universit\'e de Lorraine, BP 70239, 54506 Vandoeuvre-l\`es-Nancy Cedex, France}

\email{youness.lamzouri@univ-lorraine.fr}
 
 \thanks{  A.G.~is partially supported by grants from NSERC (Canada), and by 
European Research Council  grant, agreement n$^{\text{o}}$ 670239. }

\subjclass{11M26, 11M06, 60F10}
\keywords{zeta functions, distributions, moment generating function}

 \begin{abstract} In this paper, we investigate the large deviations of sums of weighted random variables that are \emph{approximately independent}, generalizing and improving some of the results of Montgomery and Odlyzko. We are motivated by examples arising from number theory, including the sequences $p^{it}$, $\chi(p)$, $\chi_d(p)$, $\lambda_f(p)$, and $\text{Kl}_q(a-n, b)$; where $p$ ranges over the primes, $t$ varies in a large interval, $\chi$ varies among all characters modulo $q$, $\chi_d$ varies over quadratic characters attached to fundamental discriminants $|d|\leq  x$, $\lambda_f(n)$ are the Fourier coefficients of holomorphic cusp forms $f$ of (a large) weight $k$ for the full modular group, and $\text{Kl}_q(a, b)$ are the normalized Kloosterman sums modulo a large prime $q$, where $a, b$ vary in $(\mathbb{F}_q)^{\times}$.    
 \end{abstract}

\maketitle

\newcommand{\cbar}{\overline{\chi}}
\newcommand{\pbar}{\overline{\psi}}
\newcommand{\sumstar}{\sideset\and ^* \to \sum}

\section{Introduction}

In this paper we give estimates for the probability of extremely 
  large deviations of sums of weighted random variables that are  approximately independent. Our results generalize and improve results in a wide variety of questions of interest to analytic number theory (arguably in the spirit of some of Kubilius's work). In this section we will carefully formulate seemingly obscure independence hypotheses on distributions of  sums of weighted random variables  and then state results  for such distributions near to the high end of their feasible range. Then, in the next two sections, we will exhibit the application of our results to many examples that have been considered in the literature.
 
 Let $\mathcal A=\{ 1\leq q_1\leq q_2\leq \ldots \}$ be a given infinite sequence   of real numbers which is fairly well behaved, in that if $x\geq 2\beta'$ then
 \begin{equation}\label{A-distn}
 \mathcal A(x):=\#\{ n: q_n\leq x\} = \alpha \int_{\beta'}^x (\log (t/\beta))^{A-1}  (1+O(1/(\log t)^B))dt ,
\end{equation}
 for some constants $\alpha, \beta>0$ and $A\geq 0$, where $B>\max\{A,1\}$ and $\beta'=(2+\beta)^2$.
 In several   number theory applications,   we are interested in  the distribution of large values of   
\begin{equation}\label{TheModel}
H_{\X}(Q):=\sum_{q_n\leq Q } \frac{\X(n)}{q_n},
\end{equation}
where $Q$ is   large   (perhaps $\infty$), and the $\X(n)$ are real valued random variables that are ``independent enough".
The asymptotic for $ \mathcal A(x)$ implies that the corresponding harmonic sum
\[
H(x):=\sum_{q_n\leq x } \frac{1}{q_n}
\]
diverges as $x\to \infty$, so the sum in \eqref{TheModel} can get arbitrarily large if the $\X(n)$ are supported, say, on $[-1,1]$, and $Q\to \infty$.

 For examples, we might take 
 \begin{itemize}
 \item The $q_n$ to be the   positive integers ($\alpha=1, \alpha\beta=1, A=1$);
  \item The $q_n$ to be the primes  ($\alpha=1,\alpha\beta=1, A=0$); or even 
   \item The $q_n$ to be the absolute values of the zeros of the Riemann zeta-function  ($\alpha=1/2\pi,\alpha\beta=1, A=2$), counting each zero and its conjugate.\footnote{One can work with any individual $L$-function or appropriate families of  $L$-functions since they also satisfy the hypothesis with $A=2$.}
  \end{itemize}
  In each of these cases   $B$ may be taken to be arbitrarily large.
 
 What do we mean by the random variables being ``independent enough''?    Since $1$ and $\pi$ are linearly independent over the rationals, one can deduce that the pair $(e^{it},e^{2\pi it})$ is equidistributed in $\mathbb U^2$ (where $\mathbb U$ is the unit circle) as $t$ varies over a long segment over the real line. That is $e^{it}$ and $e^{2\pi it}$ act more-or-less independently, despite being obviously multiplicatively dependent. However we might want to assume that they appear to be independent on a not-too-long interval, and also with several real numbers (in place of $1$ and $\pi$) that we believe are 
  linearly independent over $\mathbb Q$.
  
Our goal is to determine the probability that the sum in \eqref{TheModel} is $>V$, with $V$ large; that is, to determine
\[
\Phi_Q(V)=\Phi_Q(\X,V):= \pr\left(H_{\X}(Q)>V\right),
\]
the probability that $H_{\X}(Q)>V$.
This will depend on several parameters, most importantly $A$, and some invariants of the probability distributions of our random variables.

 \subsection{Families of random variables}  Our goal is to model a sequence of random variables $\X(n)$ that are    ``independent  enough'' by a  sequence of \emph{independent random variables} $\Yn$ for which we determine the distribution of  large values of $H_{\Y}(Q)$. We then show that the model actually provides the correct probability for $H_{\X}(Q)$ in an appropriate range. To explicitly model the $\X(n)$ by the $\Yn$ we will assume the following explicit connection:
 \medskip

\noindent \textbf{Approximate Independence Hypothesis} AIH$(L,Q)$: \newline
\emph{The random variables $\{\X(n)\}_{q_n\leq Q}$  satisfy AIH$(L,Q)$ if there exist independent random variables $\{\Yn\}_{q_n\leq Q}$ for which 
$$ 
\ex\big(\X(n_1)\cdots \X(n_{\ell})\big)= \ex\big(\Y(n_1)\cdots \Y(n_{\ell})\big) +O\left(e^{-L}\right),
$$
for all $\ell \leq L$  and for all choices of $n_1, \dots, n_{\ell} \leq \mathcal{A}(Q)$.}\footnote{Throughout, $\ex(T)$ denotes the expected value of the random variable $T$.}
 \medskip

 It is convenient to divide the $\Yn$ through by an appropriate constant (which can be incorporated into the $q_n$) and re-centered so that each $\Yn$
 is supported on $[-1, 1]$ with $\ex(\Yn)=0$.\footnote{That is, let $\mu_n=\mathbb E(\Yn)$ and $m_n:= \max |\Yn-\mu_n|$ and then we work with $\Y'(n):=\tfrac{\Yn-\mu_n}{m_n}$, so that
$\sum_{q_n\leq Q} \tfrac{\Y(n)}{q_n}=\sum_{q_n\leq Q} \tfrac{\Y'(n)}{q_n'} + \mu$ where $q_n'= q_n/m_n$ and $\mu:=\sum_{q_n\leq Q} \tfrac{\mu_n}{q_n}$.}
It will also be useful if the $\Yn$ ``converge'' to a well-behaved universal distribution $\Y$ as $n\to \infty$:
\smallskip
 
 
\noindent \textbf{Convergent sequence of Random Variables Hypothesis} CRVH$(\Y)$: \newline
\emph{The independent random variables $\{\Yn\}_{n\geq 1}$   satisfy CRVH$(\Y)$ if 
\begin{itemize}
\item  Each $\Yn$ is supported on $[-1, 1]$ with $\ex(\Yn)=0$;
\item  There exists an absolute constant $c>0$ such that 
\begin{equation}\label{LowerBoundf}
\pr(\Yn>1-1/t) \gg \exp(-ce^{\sqrt{t}}) \text{ for all } t\geq 1,
\end{equation} 
where the implicit constant is absolute;
\item There exist independent  random variables $\{\Y_n\}_{n\geq 1}$, each of which is distributed identically to $\Y$, such that $\Y(n)$ depends on $\Y_n$ with $\Yn=\Y_n+O(\tfrac 1{(\log n)^{2}})$.  
\end{itemize}
 }
 \medskip
 
 One can deduce that if this holds then  $\Y$ is   supported on $[-1, 1]$ with $\ex(\Y)=0$, and satisfies 
\eqref{LowerBoundf}. We now define some invariants associated with $\Y$: Let 
\begin{equation}\label{Thefunctionf}
\f(t):=\begin{cases} \log \ex (e^{t\Y}) & \text{ if } 0\leq t < 1,\\ \log \ex (e^{t\Y})- t & \text{ if }\qquad t \geq 1,\end{cases} \qquad \text{ with } \eta_{\Y}=\int_0^{\infty} \frac{\f(u)}{u^2}du.
\end{equation} 
We will show that this last integral converges in Lemma \ref{LogEx} because of the hypotheses on $\Y$. 
    
 
\subsection{The diagonal sum}

Applying partial summation to the counting function $\mathcal A(\cdot)$, we find that (see Lemma \ref{LogSum} below)
if $y\geq 4\beta'$
\begin{equation}\label{H-value}
 H(y) =
 \begin{cases}  \frac{\alpha}{A}(\log y/\beta)^{A} +C_{\mathcal A} +o_{y\to \infty}(1) & \text { if } A>0, \\
   \alpha \log\log y/\beta  +C_{\mathcal A} +o_{y\to \infty}(1) & \text { if } A=0, \\
 \end{cases}
\end{equation}
 for some constant $C_{\mathcal A}$,  since we assumed that $B>A$. 
   
 If $\Phi_Q(V)>0$ then $V\leq H(Q)$, which implies that  $W(V)\leq (1+o(1)) \log (Q/\beta)$ if we define
 \[
W=W(V):= \begin{cases} \big(\tfrac{A}{\alpha} (V-C_{\mathcal A})\big)^{1/A} & \text{ if } A>0,\\
\exp (  \tfrac{1}{\alpha}(V-C_{\mathcal A}) ) & \text{ if } A=0,
\end{cases}
\]
for $V\geq 1+C_{\mathcal A}$.

 We obtain estimates for the distribution of large values of $H_{\Y}(Q)=\sum_{q_n\leq Q} \Y(n)/q_n$ which are more precise than those of Montgomery and Odlyzko \cite{MoOd}, who obtained upper and lower bounds for $\Phi_Q(\Y, V)$ in the case where the $\Y(n)$ are independent and identically distributed, but considered more general weights $q_n$.

\begin{thm}\label{cor: Main2*}
Let $Q$ be large and suppose that $\{\Yn)\}_{q_n\leq Q}$ is a sequence of independent random variables which satisfy CRVH$(\Y)$. Then
  \[
 \Phi_Q(\Y, V)= \exp\left(-\alpha\beta e^{-\eta_{\Y}-1}   W^{A-1}e^W \left(1+O\left(
\frac{\sqrt{\log W}}{W^{1/2}}+\frac{1}{W^{(B-1)/2}}
\right)\right)\right)
\]
for those values of $V$ for which  
 \[
c\leq  W= W(V)\leq   \log Q -     \log\log Q -     \Theta'
\]
for some suitably large constants $\Theta'>0$ and $c\geq 1$. 
\end{thm}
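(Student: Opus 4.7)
My plan is to use the classical Chernoff upper bound together with an exponential tilting lower bound, both driven by asymptotic analysis of the cumulant generating function
\begin{equation*}
K(s) := \log \ex\bigl(\exp(s H_{\Y}(Q))\bigr) = \sum_{q_n \leq Q} \log \ex\bigl(e^{s \Y(n)/q_n}\bigr),
\end{equation*}
which factors by independence. The third clause of CRVH$(\Y)$ lets me replace each $\Y(n)$ with the i.i.d.\ copy $\Y_n$ of $\Y$ up to a pathwise error $O(1/(\log n)^{2})$, reducing each summand to $\f(s/q_n) + (s/q_n)\mathbf{1}[s/q_n \geq 1]$ plus a negligible remainder. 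Partial summation against \eqref{A-distn} then converts the sum into $\alpha \int_{\beta'}^{Q} g(s/t)(\log t/\beta)^{A-1}\,\d t$, where $g := \f + t\mathbf{1}[t \geq 1]$.

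The main calculation is the change of variables $u = s/t$ together with a Taylor expansion of $(L - \log u)^{A-1}$ around $L := \log(s/\beta)$. The expansion $(L-\log u)^{A-1} = L^{A-1} - (A-1)L^{A-2}\log u + O(L^{A-3}(\log u)^{2})$, integrated against $\f(u)/u^{2}$, produces the main contribution $\alpha s L^{A-1}\eta_{\Y}$ together with smaller terms bounded by the convergent integrals $\int |\f(u)|(\log|u|)^{j}/u^{2}\,\d u$. The linear-in-$t$ portion of $g$ integrates directly to $sH(s) - sC_{\mathcal{A}} + o(1)$, so
\begin{equation*}
K(s) - sV \;=\; \alpha s L^{A-1}\eta_{\Y} \;+\; \tfrac{\alpha s}{A}\bigl(L^{A} - W^{A}\bigr) \;+\; (\text{lower order}).
\end{equation*}
Solving $K'(s) = V$ to leading order yields $L = W - \eta_{\Y} - 1 + O(1/W)$, equivalently $s = \beta \exp(W - \eta_{\Y} - 1)(1 + O(1/W))$; substituting back gives
\begin{equation*}
K(s) - sV \;=\; -\alpha\beta\,e^{-\eta_{\Y}-1}\,W^{A-1}e^{W}\Bigl(1 + O\bigl(W^{-1/2}\sqrt{\log W} + W^{-(B-1)/2}\bigr)\Bigr),
\end{equation*}
which is precisely the Chernoff bound $\log \Phi_Q(\Y, V) \leq K(s)-sV$. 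Here the $\sqrt{\log W}/W^{1/2}$ absorbs the next-order Taylor remainder together with the partial-summation discretization, while $W^{-(B-1)/2}$ absorbs the $O(1/(\log t)^{B})$ error of \eqref{A-distn}.

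For the matching lower bound I introduce the exponentially tilted law $\d\widetilde{\pr}/\d\pr = e^{sH_{\Y}(Q)}/\ex(e^{sH_{\Y}(Q)})$ at the saddle-point $s$. The $\Y(n)$ remain independent under $\widetilde{\pr}$ with $\widetilde{\ex}(H_{\Y}(Q)) = K'(s) = V$ and $\widetilde{\mathrm{Var}}(H_{\Y}(Q)) = K''(s)$ of order $\alpha W^{A-1}/s$ by the same partial summation. A local central limit theorem (or a Berry--Esseen estimate) for the tilted independent sum yields $\widetilde{\pr}(V < H_{\Y}(Q) < V + 1/s) \gg 1/(s\sqrt{K''(s)})$, hence
\begin{equation*}
\Phi_Q(\Y, V) \;\geq\; \frac{e^{-O(1)}}{s\sqrt{K''(s)}}\cdot e^{-sV}\,\ex\bigl(e^{sH_{\Y}(Q)}\bigr).
\end{equation*}
Since $\log(s\sqrt{K''(s)}) = O(W)$ is negligible next to the main term $W^{A-1}e^{W}$, this matches the upper bound within the stated relative error.

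The main obstacle will be the quantitative local CLT for the tilted sum: the tilted summands are strongly heterogeneous (the tilt concentrates $\Y(n)$ near $+1$ when $q_n \ll s$ but barely perturbs it when $q_n \gg s$), so the lower bound \eqref{LowerBoundf} on $\pr(\Y > 1 - 1/t)$ becomes essential to rule out degenerate tilted marginals and to control densities near the right endpoint. Secondary nuisances include propagating the $O(1/(\log n)^{2})$ CRVH error through the exponential moments uniformly in $n$, and keeping the change-of-variable asymptotics uniform in $s$ over the allowed window of $W$; both are routine given the hypotheses but require care.
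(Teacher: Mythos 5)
Your plan is correct in outline and its analytic core (the partial summation against \eqref{A-distn}, the change of variables $u=s/t$, and the identification of the main term $\alpha\eta_{\Y}s(\log s)^{A-1}$ in the cumulant) is exactly the content of Proposition \ref{Cumulant}; your saddle point $s=\beta e^{W-\eta_{\Y}-1}(1+O(1/W))$ agrees with the paper's choice $s=e^{-\eta_{\Y}-1}Z(V)$. Where you genuinely diverge is in how the cumulant estimate is converted into two-sided bounds on $\Phi_Q$. The paper never tilts and never invokes any central limit theorem: it uses the identity $\exp(K_Q(u))=u\int_{-\infty}^{\infty}e^{ut}\Phi_Q(t)\,dt$ and localizes the integral to $[V-\Delta,V+\Delta]$ by comparing the Laplace transform at $s$ with its values at $e^{\pm\delta}s$ (this comparison is what produces the $\sqrt{\log W}/W^{1/2}$ error, via $1+\delta-e^{\delta}\asymp-\delta^2$), and then extracts upper and lower bounds for $\Phi_Q(V\pm\Delta)$ from the monotonicity of $\Phi_Q$ alone. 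Your route buys a conceptually standard Bahadur--Rao structure and in principle a slightly better error term from the Chernoff side, but it pays for the lower bound with the anti-concentration input you correctly flag as the main obstacle. Two comments there. First, a full local CLT at scale $1/s$ with the bound $\gg 1/(s\sqrt{K_Q''(s)})$ is more than you need and is genuinely delicate: for lattice-valued $\Y$ (e.g.\ the quadratic-character model where $\Y\in\{-1,0,1\}$) the tilted sum lives on a countable set and an Esseen smoothing argument at that scale is not automatic. Since only $\log\Phi_Q$ is required to relative precision $\sqrt{\log W}/W^{1/2}$, Chebyshev under the tilted law on a window of width a few multiples of $\sqrt{K_Q''(s)}\asymp\sqrt{W^{A-1}e^{-W}}$, followed by a harmless shift of $V$, already suffices; the additive loss $O(s\sqrt{K_Q''(s)}+\log(s\sqrt{K_Q''(s)}))=O(\sqrt{W^{A-1}e^{W}})$ is negligible against $W^{A-1}e^{W}$. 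Second, when you replace $\fn$ by $\f$ using the third clause of CRVH, the per-term error $O(s/(q_n(\log n)^2))$ is \emph{not} uniformly negligible for small $q_n$ (for $A=0$ it can contribute a relative error of order $W$); you must first truncate to $s^{1/2}<q_n<s\log s$ using \eqref{LogEx1} applied to $\fn$ directly, exactly as in the proof of Proposition \ref{Cumulant}, and only then invoke the comparison with $\Y$.
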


It is arguably surprising that the specifics of the probability distribution $\Y$ only seem  to affect the constant term $e^{-\eta_{\Y}}$ in $ \Phi_Q(\Y, V)$.
All other   parts of this formula are determined from the asymptotic for $\mathcal A(x)$, except for the constant $C_{\mathcal A}$ (which appears in the definition of $W(V)$) which largely depends on the small $q_n$.

We now give our main result, which is applicable to problems in analytic number theory, by starting with a sequence of random variables $\X(n)$ which can be appropriately approximated by a  sequence of independent random variables $\Yn$.

 \begin{thm}\label{cor: Main2} Let $Q$ be large. Suppose that the random variables $\{\X(n)\}_{q_n\leq Q}$  satisfy AIH$(L,Q)$, so that they are approximated by the independent random variables $\{\Yn\}_{q_n\leq Q}$, with $(\log Q)^{2A^2} \leq L \leq \mathcal A(Q)$. Moreover suppose that the $\{\Yn)\}_{q_n\leq Q}$ satisfy CRVH$(\Y)$. Then
  \[
 \Phi_Q(\X, V)= \exp\left(-\alpha\beta e^{-\eta_{\Y}-1}   W^{A-1}e^W \left(1+O\left(
\frac{\sqrt{\log W}}{W^{1/2}}+\frac{1}{W^{(B-1)/2}}
\right)\right)\right)
\]
where  
 \[
c\leq  W= W(V)\leq   \log L -   A  \log\log Q -   \mathbf{1}_{A=0}\cdot \log\log\log Q - \Theta'
\]
for some suitably large constants $\Theta'>0$ and $c\geq 1$. (Throughout  $\mathbf{1}_{\mathcal{C}}$ is the indicator function for the event $\mathcal{C}$.)
\end{thm}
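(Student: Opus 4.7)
The plan is to deduce Theorem \ref{cor: Main2} from Theorem \ref{cor: Main2*} by a moment/MGF comparison: the hypothesis AIH$(L,Q)$ forces the distribution of $H_{\X}(Q)$ to be indistinguishable from that of its independent ``model'' $H_{\Y}(Q)$, as far as tails of the prescribed size are concerned, so that the asymptotic of Theorem \ref{cor: Main2*} transfers without changing either the main term or the error term.

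First I would convert AIH into a direct moment comparison. Expanding
\[
\ex\big(H_{\X}(Q)^\ell\big) \;=\; \sum_{\substack{n_1,\ldots,n_\ell \\ q_{n_i}\leq Q}} \frac{\ex\big(\X(n_1)\cdots \X(n_\ell)\big)}{q_{n_1}\cdots q_{n_\ell}},
\]
and doing the same for $H_{\Y}(Q)$, subtracting, and applying AIH termwise gives
\[
\big|\ex\big(H_{\X}(Q)^\ell\big) - \ex\big(H_{\Y}(Q)^\ell\big)\big| \;\leq\; e^{-L}\, H(Q)^\ell \qquad (\ell \leq L).
\]
Since $\X(n)$ and $\Y(n)$ are supported in $[-1,1]$, the sums $H_{\X}(Q)$ and $H_{\Y}(Q)$ are both bounded in absolute value by $H(Q)$, so the MGFs $F_{\X}(s):=\ex(e^{sH_{\X}(Q)})$ and $F_{\Y}(s)$ are entire. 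Weighting the above by $s^\ell/\ell!$ and summing over $\ell\leq L$, and crudely bounding the tail $\ell>L$ using $|H_\ast(Q)|\leq H(Q)$ for both $\ast=\X,\Y$, I obtain
\[
|F_{\X}(s)-F_{\Y}(s)| \;\leq\; e^{-L} e^{sH(Q)} \;+\; 2\sum_{\ell>L}\frac{(sH(Q))^\ell}{\ell!},
\]
which is negligible compared with $F_{\Y}(s)$ provided $sH(Q)\leq L - O(\log L)$.

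Next I would feed this into the saddle-point analysis underlying Theorem \ref{cor: Main2*}. That theorem is proved by evaluating the Chernoff integral associated with $F_{\Y}$ at a saddle point $s^\ast=s^\ast(V)$ determined by $(\log F_{\Y})'(s^\ast)=V$. A short computation using the splitting of $\f$ at $t=1$ in \eqref{Thefunctionf} gives
\[
\log F_{\Y}(s) \;=\; s\!\!\sum_{q_n\leq s}\!\frac{1}{q_n} \;+\; \sum_n \f(s/q_n),
\]
so $(\log F_{\Y})'(s)\sim H(s)$ and hence $s^\ast\asymp e^{W}$ by \eqref{H-value}. Consequently
\[
s^\ast H(Q) \;\asymp\; \begin{cases} e^{W}(\log Q)^A & (A>0), \\ e^{W}\log\log Q & (A=0),\end{cases}
\]
and the hypothesis $W\leq \log L - A\log\log Q - \mathbf{1}_{A=0}\log\log\log Q - \Theta'$ is precisely the statement $s^\ast H(Q)\leq L - O(\log L)$. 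In this range the MGF discrepancy of the previous step is much smaller than $F_{\Y}(s^\ast)$, so substituting $F_{\X}$ for $F_{\Y}$ in the saddle-point evaluation changes the answer by a multiplicative $1+o(1)$, yielding the stated asymptotic with the same constants and error terms.

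The main obstacle is this last substitution, which must be carried out \emph{uniformly and quantitatively}: one needs the MGF comparison to persist on a complex neighbourhood of $s^\ast$ of the size required by the saddle-point contour, and one must check that the ratio $F_{\X}(s)/F_{\Y}(s)$ is $1+o(1)$ throughout that neighbourhood with a relative error smaller than the claimed $\sqrt{\log W}/W^{1/2}+W^{-(B-1)/2}$. Tracking these errors is what forces the precise shape of the admissible range for $W$, including the extra $\log\log\log Q$ correction when $A=0$.
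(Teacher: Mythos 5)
Your proposal follows essentially the same route as the paper: the moment expansion plus Stirling tail bound giving $\ex(e^{sH_{\X}(Q)})=\ex(e^{sH_{\Y}(Q)})+O(e^{-L/2})$ for $s\ll L/H(Q)$ is the paper's Proposition \ref{LaplaceTransformQ}, and verifying that the saddle point $s^{\ast}\asymp e^{W}$ lies in this range precisely when $W\leq \log L-A\log\log Q-\mathbf{1}_{A=0}\log\log\log Q-\Theta'$ is exactly how the paper's Theorem \ref{thm: Main} adapts the proof of Theorem \ref{Main*}. The only cosmetic difference is that the paper's saddle-point argument is a real-variable Laplace-transform concentration estimate (using monotonicity of $\Phi_Q$), so the complex-neighbourhood uniformity you flag as the main obstacle is not actually needed.
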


\begin{rem}
 If AIH$(L,Q)$ holds for $L=\mathcal A(Q)\sim \alpha Q (\log (Q/\beta))^{A-1}$, then the range for $W$ here is
\[
1\ll  W(V)\leq    \log Q -    \log\log Q -    \mathbf{1}_{A=0}\cdot \log\log\log Q -\Theta''  .
  \]
 much like in Theorem \ref{cor: Main2*}.
 
If $A>1$ then  $W=\big(\tfrac{A}{\alpha} V\big)^{1/A} +O(1/V^{1-1/A})$, so we may replace $W$ by $W':=\big(\tfrac{A}{\alpha} V\big)^{1/A}$ in Theorem \ref{cor: Main2} at the cost of an additional error term of $O(1/W'^{A-1})=o(1)$. The result therefore becomes
$\log  \Phi_Q(V) \sim  -\kappa   V e^{W'}/W'$ where $\kappa= \beta A e^{-\eta_{\Y}-1} $.

 If $A=1$ then we are at liberty to select $\beta$ since its contribution can be incorporated into the constant $C_{\mathcal{A}}$ (by changing $C_{\mathcal{A}}$ to $C'_{\mathcal{A}}= C_{\mathcal{A}}- \alpha\log\beta$ in \eqref{H-value}), and it is most convenient to  let $\beta=1$.
 The result therefore becomes 
$\log  \Phi_Q(V) \sim  -\kappa e^{V/\alpha}$
 where $\kappa=\alpha  e^{-\eta_{\Y}-1-C_{\mathcal A}/\alpha} $.
  \end{rem}
 
Under the same assumptions in Theorem \ref{cor: Main2}, we will establish that the sum $H_{\X}(Q)$ converges in distribution to that of some random variable $\mathbb{W}$. We will also prove that the characteristic function of $\mathbb{W}$ is rapidly decreasing, which implies that $\mathbb{W}$ is absolutely continuous and has a uniformly bounded probability distribution function. 
Furthermore, we shall use the Berry-Esseen inequality to bound the rate of convergence. 

\begin{thm}\label{LimitLaw}
Under the same assumptions as Theorem \ref{cor: Main2} there exists an absolutely continuous random variable $\mathbb{W}$ such that $H_{\X}(Q)$ converges in distribution to $\mathbb{W}$ as $Q\to \infty$, and moreover we have 
 $$ \sup_{V\in \mathbb{R}} \big|\Phi_Q(\X, V)- \pr(\mathbb{W}>V)\big| \ll  \frac{H(Q)}{L}+  \frac{(\log Q)^{(A-1)/3}}{Q^{1/3}}.$$
\end{thm}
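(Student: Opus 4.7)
The plan is to prove three quantitative facts: (1) the a.s.\ limit $\mathbb{W}:=\lim_{Q\to\infty}H_\Y(Q)$ exists and has a uniformly bounded density; (2) $H_\Y(Q)$ approximates $\mathbb{W}$ with an explicit rate; (3) $H_\X(Q)$ tracks $H_\Y(Q)$ via Esseen's inequality applied to the moment matching from AIH$(L,Q)$.

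\emph{Existence and density of $\mathbb{W}$.}  From \eqref{A-distn}, $q_n\gg n/(\log n)^{A-1}$, so $\sum_n q_n^{-2}<\infty$; Kolmogorov's theorem then gives a.s.\ convergence of $\sum_n\Y_n/q_n$, and the perturbation $|\Y(n)-\Y_n|\ll(\log n)^{-2}$ from CRVH$(\Y)$ is absolutely summable against $1/q_n$, so $\mathbb{W}$ exists a.s. The lower tail bound \eqref{LowerBoundf} together with $\ex(\Y)=0$ and $|\Y|\leq 1$ forces $\sigma^2:=\mathrm{Var}(\Y)>0$, hence $|\ex(e^{iu\Y})|\leq 1-\tfrac14\sigma^2u^2$ for $|u|\leq u_0$; by CRVH$(\Y)$ the same holds uniformly for $\phi_{\Y(n)}$ once $n$ is large. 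Restricting the product $\phi_\mathbb{W}(s)=\prod_n\ex(e^{is\Y(n)/q_n})$ to indices with $q_n\geq|s|/u_0$ and using partial summation $\sum_{q_n\geq x}q_n^{-2}\asymp(\log x)^{A-1}/x$,
$$
|\phi_\mathbb{W}(s)|\ll \exp\!\bigl(-c|s|(\log|s|)^{A-1}\bigr)\qquad (|s|\geq 2).
$$
Thus $\phi_\mathbb{W}\in L^1(\R)$ and $\mathbb{W}$ is absolutely continuous with $\|f_\mathbb{W}\|_\infty<\infty$; restricting also to $q_n\leq Q$ gives $|\phi_{H_\Y(Q)}(s)|\ll\exp(-c|s|(\log|s|)^{A-1})$ for $|s|\leq u_0Q/2$, and a standard smoothing argument (to handle any atomic part of $\Y$) yields $\|f_{H_\Y(Q)}\|_\infty=O(1)$ uniformly in $Q$.

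\emph{Comparing $H_\Y(Q)$ with $\mathbb{W}$.}  The tail $\mathbb{W}-H_\Y(Q)=\sum_{q_n>Q}\Y(n)/q_n$ is mean zero with variance $\ll(\log Q)^{A-1}/Q$ by partial summation. Chebyshev and the density bound on $\mathbb{W}$ yield, for any $\delta>0$,
$$
\sup_V\bigl|\pr(H_\Y(Q)>V)-\pr(\mathbb{W}>V)\bigr|\ll \delta+\frac{(\log Q)^{A-1}}{Q\delta^2},
$$
and optimizing $\delta=((\log Q)^{A-1}/Q)^{1/3}$ yields the second error term of the theorem.

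\emph{Comparing $H_\X(Q)$ with $H_\Y(Q)$ via Esseen.}  Expanding $H_\cdot(Q)^\ell$ as a multilinear sum and applying AIH$(L,Q)$ termwise gives, for every $\ell\leq L$,
$$
\bigl|\ex H_\X(Q)^\ell-\ex H_\Y(Q)^\ell\bigr|\leq e^{-L}H(Q)^\ell,
$$
while trivially $|\ex H_\cdot(Q)^\ell|\leq H(Q)^\ell$ for all $\ell$. With $T:=L/(2eH(Q))$, splitting the Taylor expansions of $\phi_\X,\phi_\Y$ at $\ell=L$ (and controlling the tail by Stirling) gives $|\phi_\X(s)-\phi_\Y(s)|\ll e^{-L/2}$ for $|s|\leq T$. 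Esseen's smoothing inequality with $\|f_{H_\Y(Q)}\|_\infty=O(1)$ then yields
$$
\sup_V\bigl|\Phi_Q(\X,V)-\pr(H_\Y(Q)>V)\bigr|\ll \int_{-T}^T\frac{|\phi_\X(s)-\phi_\Y(s)|}{|s|}\,ds+\frac{1}{T}\ll \frac{H(Q)}{L},
$$
the integrand being bounded near $s=0$ by $O(e^{-L}H(Q))$ from the $\ell=1$ term. Combining with the previous paragraph completes the proof. The main obstacle is the density bound in the first stage: one must turn the non-degeneracy of $\Y$ (which follows from \eqref{LowerBoundf}) into the quantitative scale-invariant characteristic-function bound, transfer it uniformly across all $\Y(n)$ despite the CRVH perturbation, and match it with the dyadic counting of $q_n$ from \eqref{A-distn} to secure integrability of $\phi_\mathbb{W}$.
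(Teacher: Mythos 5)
Your argument is essentially correct and reaches both error terms, but it is organized differently from the paper's proof. The paper never constructs $\mathbb{W}$ as an almost-sure limit: it defines $\varphi(t)=e^{K(it)}$ via the limit of the cumulant generating functions (Corollary \ref{Cor.cumulant}), invokes L\'evy's continuity theorem to produce $\mathbb{W}$, proves the decay $\varphi(t)\ll\exp(-C_0|t|(\log|t|)^{A-2})$ so that $\mathbb{W}$ has a bounded density, and then applies the Berry--Esseen inequality \emph{once}, comparing $H_{\X}(Q)$ directly with $\mathbb{W}$; the two error terms arise from the single bound $|\ex(e^{itH_{\X}(Q)})-\varphi(t)|\ll e^{-L/2}+t^2(\log Q)^{A-1}/Q$ on $|t|\leq T$ with $T=\min\big(L/(10H(Q)),Q^{1/3}(\log Q)^{-(A-1)/3}\big)$. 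You instead build $\mathbb{W}$ by Kolmogorov's theorem and split the comparison into $H_{\X}(Q)$ versus $H_{\Y}(Q)$ (Esseen) and $H_{\Y}(Q)$ versus $\mathbb{W}$ (Chebyshev on the tail plus the density of $\mathbb{W}$). Your route is somewhat more probabilistic and gives a cleaner interpretation of the second error term as a tail-variance effect (and your characteristic-function decay exponent $(\log|s|)^{A-1}$ is in fact slightly sharper than the paper's $(\log|s|)^{A-2}$, since you avoid the Taylor-expansion error by using the Gaussian-type bound down to $q_n\asymp|s|$); the paper's route avoids ever having to discuss regularity of the law of the finite sum $H_{\Y}(Q)$.

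That last point is the one soft spot in your write-up. The claim $\|f_{H_{\Y}(Q)}\|_\infty=O(1)$ uniformly in $Q$ is false as literally stated: if the $\Y(n)$ are discrete (e.g.\ the quadratic-character model, where $\Y(p)\in\{-1,0,1\}$), then $H_{\Y}(Q)$ is purely atomic and has no density, and no smoothing of $H_{\Y}(Q)$ itself can produce one. Your characteristic-function bound only holds for $|s|\leq u_0Q/2$, which is exactly why. The correct fix is either (i) to use the smoothing/Esseen inequality in its concentration-function form, noting that $\sup_x\pr\big(x\leq H_{\Y}(Q)\leq x+1/T\big)\ll 1/T$ follows from $\frac1T\int_{-T}^{T}|\phi_{H_{\Y}(Q)}(s)|\,ds\ll 1/T$ since your $T\ll Q/\log Q$ lies inside the range of validity of the decay estimate; or (ii) to restructure as the paper does, writing $\phi_{H_{\X}(Q)}-\phi_{\mathbb{W}}=(\phi_{H_{\X}(Q)}-\phi_{H_{\Y}(Q)})+(\phi_{H_{\Y}(Q)}-\phi_{\mathbb{W}})$ with $|\phi_{H_{\Y}(Q)}(t)-\phi_{\mathbb{W}}(t)|\ll t^2(\log Q)^{A-1}/Q$, and applying Esseen against $\mathbb{W}$, which genuinely has a bounded density. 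With either repair your proof is complete.
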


The plan of the paper is as follows. In the next section we describe some examples of sums of independent random variables $H_{\Y}(Q)$ for which Theorem \ref{cor: Main2*} holds. In section 3, we exhibit sequences of random variable $\X(n)$ which satisfy the hypothesis AIH, and present some applications of our results. In section 4, we describe some estimates for the harmonic sum $H(Q)$ that will be useful in our subsequent work. In section 5 we investigate the cumulant generating function of $H_{\Y}(Q)$. Theorem \ref{cor: Main2*} will be proved in section 6. Finally in section 7, we explore how to use the hypothesis AIH and prove Theorems \ref{cor: Main2} and \ref{LimitLaw}.

The methods that we use may be unsurprising to the experts, and the results that we obtain rely on the unusually  strong hypotheses.
However we justify these remarks by noting that our results are  applicable to a very wide selection of number theory distribution problems, and the
ranges in our Theorems  are longer than in the literature on relevant questions, extending very close to the boundary of what is possible.  Example 3 below highlights the value of these results, improving the range in what is believed about the distribution of the error term in the prime number theorem, certainly a very well studied problem.

\section{Examples of sums of random variables $H_{\Y}(Q)$ where Theorem \ref{cor: Main2*} holds}

\subsection*{Example 1: The integers}
We are interested in the large values of 
\begin{equation} \label{eq: zetasum}
  \sum_{n\geq 1} \frac{\Y(n)}{n} ,
\end{equation}
where $\{\Y(n)\}_{n\geq 1}$ is a sequence of independent random variables satisfying CRVH, and converging in distribution to a random variable $\Y$, which is supported on $[-1, 1]$ with $\ex(\Y)=0$, and for which \eqref{LowerBoundf} holds. An important example is to take $\Y(n)= \text{Re}(\X(n))$ where for each $n$, $\X(n)$ is uniformly distributed on the unit circle. Taking $\alpha=1, \beta=1, A=1$ and $Q\to \infty$ in Theorem \ref{cor: Main2*} we obtain for all real numbers $V>0$
$$ \Phi(\Y,V)= \pr\left(\sum_{n\geq 1} \frac{\Y(n)}{n} >V\right)=\exp\left(-(1 + o(1))e^{V-\gamma-\eta_{\Y}-1} \right),$$
where $\gamma$ is the Euler-Mascheroni constant. 
 
\subsection*{Example 2: The primes}

Let $\{\Y(p)\}_{p}$ be a sequence of independent random variables indexed by the primes, and satisfying CRVH$(\Y)$.  Then, 
taking $\alpha=1, \beta=1, A=0$ and $Q\to \infty$ in Theorem \ref{cor: Main2*} we obtain
$$\Phi(\Y,V)= \pr\left(\sum_{p} \frac{\Y(p)}{p} >V\right)=\exp\left(-  (1+ o(1))   e^{ e^{V-C} - V+C-\eta_{\Y}-1}\right),$$
where
\begin{equation}\label{ConstantPrimes}
C: =\lim_{Q\to\infty}  \left( \sum_{p\leq Q} \frac 1{p} -   \log\log Q   \right)=0.2614972128\dots
\end{equation}
We can re-normalize here and obtain
\[
\pr\left(  \prod_p e^ {\Y(p)/p}  >e^CT\right) =\Phi(\Y, \log T+C) =\exp\left(-(1+ o(1))   \frac{e^{  T  -\eta_{\Y}-1}}T \right).
\]
In particular, this result applies to the sum
\begin{equation} \label{eq: sumX}
\text{Re} \left(\sum_{p \text{ prime} } \frac{\X(p)}{p} \right),
\end{equation}
where each $\X(p)$ is independently uniformly distributed on the unit circle.
However,  $\prod_p e^ {\X(p)/p}$ is not exactly  the same as 
$$L(1,\X):=\prod_{p}\left(1-\frac{\X(p)}{p}\right)^{-1},$$
which is used to model the values of $\zeta(1+it)$ and $L(1, \chi)$, though it differs by no more than a bounded (multiplicative) constant.\footnote{And
similarly $\exp(\text{Re}( \sum_{p \text{ prime} } \frac{\X(p)}{p} ))$ differs from $|L(1,\X)|$ by a bounded (multiplicative) constant.}
Therefore, up to that constant, \eqref{eq: sumX} as each $\X(p)$ is independently uniformly distributed on the unit circle models the distribution of  values of $\log |\zeta(1+it)|$ as $t$ varies, as well 
as the distribution of values of  $\log |L(1,\chi)|$ as we vary over all of the non-principal characters mod $q$ as $q\to\infty$, up to a constant.
Moreover  \eqref{eq: sumX} as each $\X(p)$ is independently uniformly distributed at each of the $m$th roots of unity,  models the distribution of  values of  $\log |L(1,\chi)|$ as we vary over the non-principal characters of order $m$, over moduli $q$ for which $m$ is a possible order, up to a constant.

To deduce an estimate for 
$\pr\left(| L(1,\X)  | >T\right)$, one can show  that when $| L(1,\X) |$ is large then so is \eqref{eq: sumX}, and that this implies  $\X(p)\approx 1$ for all ``small'' $p$, with high probability, and so for large values of $| L(1,\X) |$ we ``expect'' that  
\[
| L(1,\X) | \sim \bigg| \prod_p e^ {\X(p)/p} \bigg| \cdot \prod_p \bigg( 1 -\frac{1}{p} \bigg)^{-1} e^{-1/p}.
\]
The effect is to replace the constant $C$ above by $\gamma\approx 0.5772156649\dots$ to obtain 
\[
\pr\left(   |L(1,\X)| >e^\gamma T\right)   =\exp\left(-  (1+ o(1))   \frac{e^{  T  -\eta_{\X}-1}}T \right),
\]
a result that was  obtained unconditionally by Granville and Soundararajan in \cite{GrSo2} by a different method.  In the next subsection we will see how to use our methods to reprove this unconditionally.

\subsection*{Example 3: Sums over zeros of the Riemann zeta-function} 
The \emph{explicit formula} (\cite{Da}, (9) of ch.17) for 
$\psi(x):=\sum_{n\leq x} \Lambda(n)$ implies that the error term in the weighted prime number theorem is
\[
\frac{\psi(x)-x}{\sqrt{x}} = - \sum_{\rho=\tfrac 12+i\gamma:\ \zeta(\rho)=0}  \frac{ x^{i\gamma}} {\rho}  +o(1),
\]
under the assumption of the Riemann Hypothesis.  If $\zeta(\tfrac 12+i\gamma)=0$ then $\zeta(\tfrac 12-i\gamma)=0$. Otherwise we believe that the $\gamma$ are completely linearly independent, and even that the $x^{i\gamma}$, with $\gamma>0$, are truly independent. This means that we can analyze the value of this sum by replacing each $x^{i\gamma}$ by a random variable which is uniformly distributed on the unit circle. However this sum is not quite in the correct form since the $\rho$ are complex numbers, but we can simply adjust our random variable by an angle $-|\rho|/\rho$. Therefore we can model the error term in the weighted prime number theorem by the values of 
$$2\, \text{Re} \left(  \sum_{n\geq 1}  \frac{\X(\gamma_n)}{|\rho_n|} \right),$$
where $0<\gamma_1<\dots$ are the successive ordinates of the zeros $\tfrac 12+i\gamma_n$ of $\zeta(s)$.
Taking $\alpha=1/2\pi, \alpha\beta=1, A=2$ in Theorem \ref{cor: Main2*} we obtain
 \[
 \pr\left( 2\sum_{n\geq 1} \frac{\text{Re}(\X(n))}{|\rho_n|}>T\right)= \exp\left(-  e^{-\eta_{\X}-1}   (  \sqrt{2\pi T} +O(  \sqrt{\log T} )) \exp\left( \sqrt{2\pi T}\right)  \right).
 \] 
Montgomery and Odlyzko obtained the $\sqrt{2\pi V}$ parts of this formula but were unable to show the asymptotic behaviour of $\log \Phi(V)$
(see (13.48) of \cite{MV}). This probability is around $1/u$ when 
\[
T    = \frac 1{2\pi} (\log\log u-\log\log\log u+\eta_{\X}+1 +o(1))^2.
\]
Montgomery and Odlyzko used their estimates to then predict  the largest error term in the prime number theorem:  
\[
\limsup_{x\to\infty} \frac{\psi(x)-x}{\sqrt{x}(\log\log\log x)^2} = \frac 1{2\pi} \text{ and }
\liminf_{x\to\infty} \frac{\psi(x)-x}{\sqrt{x}(\log\log\log x)^2} = -\frac 1{2\pi}
\]
taking $x=e^u$, which is further supported by calculations, and an alternative perspective in Monach's thesis \cite{Monach}
(see also  (15.25), (15.26) of \cite{MV}).  Our work suggests, more precisely, that one can perhaps find arbitrarily large $x$ with 
\begin{equation}
\psi(x)-x=\pm \frac 1{2\pi} \sqrt{x}(\log\log\log x-\log\log\log\log x+\eta_{\X}+1 +o(1))^2
\end{equation}
 but no larger.

\subsection{A more complicated example: The values of an $L$-function}
Here we have 
\[
\Z(p) = -\log \bigg( 1 - \frac{\Y_p}p \bigg)
\]
where each $\Y_p$ is uniformly distributed on the unit circle, or perhaps uniformly distributed on the $m$th roots of unity for some integer $m\geq 2$. We have to begin by re-centering these random variables. First note that 
\[
\ex(\Z(p)) = \sum_{k\geq 1} \frac{\ex(\Y_p^k)}{kp^k} = \sum_{m|k\geq 1} \frac{1}{kp^k} =\ -\frac 1m \log\bigg( 1-\frac{1}{p^m}\bigg)
\]
writing $m=\infty$ if $\Y_p$ is uniformly distributed on the unit circle.  This implies that
\[
 -\log \bigg( 1 + \frac{1}p \bigg) + \frac 1m \log\bigg( 1-\frac{1}{p^m}\bigg)\leq   \text{Re}(\Z(p))-\ex(\Z(p)) \leq -\log \bigg( 1 - \frac{1}p \bigg) + \frac 1m \log\bigg( 1-\frac{1}{p^m}\bigg).
\]
The upper bound is at least as large as the lower bound in absolute value (with equality when $m=2$) and the upper bound is attained.
Therefore define
\begin{equation}\label{NormalizedYp}
\Y(p):= \frac{\text{Re}(\Z(p))-\ex(\Z(p))}{- \log  ( 1 - \frac{1}p  )- \ex(\Z(p)) }  = \text{Re}(\Y_p)+O\Big( \frac 1p \Big).
\end{equation}
This sequence of random variables exactly satisfies CRVH($\Y$). Now define $q_p$ by
\[
\frac 1{q_p}:=- \log  \bigg( 1 - \frac{1}p   \bigg)- \ex(\Z(p)),
\]
so that $q_p=p+O(1)$ and 
\[
 \sum_{p\leq  Q} \frac{\Y(p)}{q_p} = 
- \sum_{p\leq  Q} \text{Re}\bigg(\log \bigg( 1 - \frac{\Y_p}p \bigg)\bigg) -\frac 1m \log \zeta(m) +O\bigg( \frac 1Q\bigg) .
\]
 This means that $\alpha=\alpha\beta=1, A=0$ with
\[
C_{\mathcal A} = \lim_{y\to\infty}  -\sum_{p\leq y}  \bigg( 1 - \frac{1}p   \bigg)- \sum_{p\leq y} \ex(\Z(p)) -\log\log y=\gamma-\frac 1m \log \zeta(m).
\]
 Therefore by Theorem \ref{cor: Main2*}
we have 
\[
\pr\left( \bigg|   \prod_{p\leq  Q}  \bigg( 1 - \frac{\Y_p}p \bigg)^{-1}\bigg|   >e^{\gamma}W \right) =   \exp\left(-     \frac{e^{W-\eta_{\Y}-1}}W \left(1+O\left(
\frac{\sqrt{\log W}}{W^{1/2}} 
\right)\right)\right)
\]
in the range $1\ll W\leq   \log Q -     \log\log Q -     \Theta'.$   
This corresponds to understanding the distribution of the values of $\zeta(1+it)$ as $t$ varies, as well as the values of $L(1,\chi)$ as $\chi$ varies over all of the characters mod $q$ for a large modulus $q$.

\begin{rem}
This generalizes rather nicely. Fix $m>1$ (including $m=\infty$) and let $C_m$ be the set of $m$th roots of unity.
Let $f_p(t)=t+\sum_{k\geq 2} a_{k,p}t^k\in \mathbb R[[t]]$ with $f_p(0)=0$ which converges absolutely in $|t|\leq \frac 1p$.
Let $\mu_{p,m}=\sum_{m|k\geq 1} a_{k,p}/p^k$, and let $F_p(t)=f_p(t/p)$. Assume that the maximum of $|\text{Re}(F_p(z) -\mu_{p,m})|$ over the $m$th roots occurs at $z=1$. 

Let $\Z(p)=F_p(\Y_p)$, then $1/q_p:=F_p(1)-\mu_{p,m}$ with $\Y(p)=q_p( \text{Re}(\Z(p))-\mu_{p,m})$. Proceeding as above we obtain
 \[
\pr\left( \bigg|   \prod_{p\leq  Q}  e^{\Z(p)}\bigg|   >e^{\gamma_f}W \right) =   \exp\left(-     \frac{e^{W-\eta_{\Y}-1}}W \left(1+O\left(
\frac{\sqrt{\log W}}{W^{1/2}} 
\right)\right)\right)
\]
in the range $1\ll W\leq   \log Q -     \log\log Q -     \Theta'$, where we have 
\[
\prod_{p\leq y} e^{F_p(1)} \sim:  e^{ \gamma_f} \log y.
\]
We can rewrite this in terms of the distance from the maximum:
 \[
\pr\left( \bigg|   \prod_{p\leq  Q}  e^{\Z(p)-F_p(1)}\bigg|   > \frac{W}{\log Q} \right) =   \exp\left(-     \frac{e^{W-\eta_{\Y}-1}}W \left(1+O\left(
\frac{\sqrt{\log W}}{W^{1/2}} 
\right)\right)\right).
\]
This corresponds to understanding the distribution of the values of  $L(1,\chi)$ as $\chi$ varies over   the characters of order $m$ and  conductor up to some large $x$ -- the  main results in \cite{GrSo1}  yield a result like this when $m=2$.
\end{rem}

\section{Examples of sequences satisfying AIH}
 
 \subsection{ The sequence $p^{it}$}  
For $\re(s)>1$ we let 
 $$ G(s)= \sum_{p} \frac{1}{p^s}.$$
 Then, one has 
 $$ \zeta(s) = \exp(G(s))R(s),$$
 where $R$ is analytic in the half plane $\re(s)>1/2$. Therefore, using Lemma 2.2 of \cite{GrSo2}, which is derived using the classical zero density estimates for $\zeta(s)$ (see for example Theorem 9.19 A of \cite{Ti}), we have 
 $$ G(1+it)= \sum_{p\leq (\log T)^{C_0}} \frac{1}{p^{1+it}}+ O\left(\frac{1}{\log T}\right),$$
 for all $t \in [T, 2T]$ except of a set of measure $T^{4/5}$, where $C_0$ is an absolute constant, and $T$ is large. 
 
 We let $\X(p)=\text{Re}(p^{it})$, for $t$ varying from $T$ to $2T$. Then we take $\Y(p)=\text{Re}(\Y_p)$, where each $\Y_p$ is a random variable uniformly distributed on the unit circle.
We will see that this sequence satisfies the Approximate Independence Hypothesis AIH$(L,Q)$ with $Q= (\log T)^{C_0}$ and $L\sim 2(\log T)/(C_0 \log\log T)$.
 
\begin{lem}\label{OrthogonalityPrimes}
Let $\X(p)=\textup{Re}(p^{it})$ and define $\ex_T(f(t))=\tfrac 1T\int_T^{2T} f(t) dt$.
Suppose that $Q=(\log T)^{C_0}$, where $C_0$ is a positive constant.
Select $L$ so that $(e^2Q)^{L/2}\leq T$.
For all primes $p_1, \dots, p_{\ell} \leq Q$ and any $\ell \leq  L$, we have 
$$ \ex_T\big(\X(p_1)\cdots \X(p_{\ell })\big)= \ex\big(\Y(p_1)\cdots \Y(p_{\ell})\big) +O(e^{-L}),$$
where  $\Y(p)= \cos(\theta_p)$ for each prime $p$, and $\{\theta_p\}$ is a sequence of independent random variables uniformly distributed on $[0, 2\pi]$.
\end{lem}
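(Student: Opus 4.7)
The plan is to expand each factor $\X(p)=\tfrac12(p^{it}+p^{-it})$, compute the time average over $t\in[T,2T]$ explicitly, and separate the ``diagonal'' contribution (which will reproduce the model expectation exactly) from the ``off-diagonal'' contribution (which will be absorbed into $O(e^{-L})$ using a quantitative lower bound on $|\log(A/B)|$ for distinct positive integers $A,B$).

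Expanding the product one obtains
\[
\X(p_1)\cdots\X(p_\ell)=\frac{1}{2^\ell}\sum_{\ve\in\{-1,+1\}^\ell} N_\ve^{\,it},\qquad N_\ve:=\prod_{j=1}^{\ell}p_j^{\ve_j},
\]
so that $\ex_T(\X(p_1)\cdots\X(p_\ell))=\frac{1}{2^\ell}\sum_\ve \ex_T(N_\ve^{it})$. When $N_\ve=1$ the integrand is identically $1$; otherwise a direct computation gives $|\ex_T(N_\ve^{it})|\leq 2/(T|\log N_\ve|)$. For the diagonal terms, group the indices by the distinct primes in the list $p_1,\dots,p_\ell$ with multiplicities $m_1,\dots,m_k$ (so $\sum_r m_r=\ell$). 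By unique factorization, $N_\ve=1$ holds iff the $\ve_j$'s sum to zero on each block; the number of such $\ve$'s equals $\prod_r\binom{m_r}{m_r/2}\mathbf{1}_{m_r\text{ even}}$, and dividing by $2^\ell=\prod_r 2^{m_r}$ yields exactly $\prod_r\ex(\cos^{m_r}\theta_{p^{(r)}})=\ex(\Y(p_1)\cdots\Y(p_\ell))$, since $\ex(\cos^m\theta)=2^{-m}\binom{m}{m/2}\mathbf{1}_{m\text{ even}}$ for $\theta$ uniform on $[0,2\pi]$.

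For the off-diagonal terms, write $N_\ve=A/B$ in lowest terms with $A\neq B$; since the underlying primes are distinct, $A$ and $B$ are coprime products of those primes with $AB\leq\prod_j p_j\leq Q^\ell$, hence $\min(A,B)\leq\sqrt{AB}\leq Q^{\ell/2}$. The elementary inequality $|\log(A/B)|\geq 1/(2\min(A,B))$, which follows from $\log(1+x)\geq x/2$ on $[0,1]$ applied to $x=1/\min(A,B)\leq 1$, then gives $|\log N_\ve|\geq\tfrac12 Q^{-\ell/2}$. Each off-diagonal term is therefore $O(Q^{\ell/2}/T)$, and since at most $2^\ell$ such $\ve$'s occur the total off-diagonal error is $O(Q^{\ell/2}/T)$. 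The hypothesis $T\geq(e^2Q)^{L/2}=e^L Q^{L/2}$ together with $\ell\leq L$ yields $Q^{\ell/2}/T\leq e^{-L}$, completing the proof.

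The only delicate point is the spacing estimate $|\log N_\ve|\gg Q^{-\ell/2}$: the naive bound $|\log(A/B)|\geq 1/\max(A,B)\geq Q^{-\ell}$ would only produce an error of $O(Q^\ell/T)$, forcing the stronger condition $T\geq(eQ)^L$. Exploiting that $|\log(A/B)|$ is controlled from below by $\min(A,B)$ rather than $\max(A,B)$, combined with $\min(A,B)\leq\sqrt{AB}$, is precisely what makes the stated hypothesis $(e^2Q)^{L/2}\leq T$ suffice.
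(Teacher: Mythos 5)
Your proof is correct and follows essentially the same route as the paper's: expand each $\X(p_j)=\tfrac12(p_j^{it}+p_j^{-it})$, match the diagonal terms $N_\ve=1$ with the model expectation, and bound each off-diagonal term by $\ll 1/(T|\log(A/B)|)$ with $|\log(A/B)|\gg 1/\min(A,B)\gg Q^{-\ell/2}$, which is exactly the paper's observation that the worst case has $a,b\asymp\sqrt{N}\leq Q^{\ell/2}$. Your treatment of the diagonal count via $\ex(\cos^m\theta)=2^{-m}\binom{m}{m/2}\mathbf{1}_{m\text{ even}}$ is merely a more explicit version of what the paper leaves to the reader.
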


\begin{proof} Let $N=p_1\cdots p_\ell$. The left-hand side above is 
\[
 \frac 1T \int_T^{2T} \bigg(   \prod_{j=1}^\ell \frac{p_j^{it}+p_j^{-it}}2 \bigg)  dt = \frac 1{2^\ell} 
 \sum_{ab=N}
  \frac 1T \int_T^{2T} (a/b)^{it} dt.
\]
The terms with $a=b$ give  exactly   the right-hand side   (as can be seen by expanding the right-hand side in the analogous way),
so the difference between the two sides is
\[
\leq \max_{ab=N, a\ne b}  \bigg| \frac{1}{T} \int_T^{2T} \left(\frac{a}{b}\right)^{it}dt \bigg| =\max_{ab=N, a\ne b} \bigg|  \left[ \frac{(a/b)^{it} } { iT \log (a/b)} \right]_T^{2T} \bigg| 
\ll \max_{ab=N, a\ne b}\frac{1}{T| \log a/b|} .
\]
This is  $\ll \frac 1T$ unless $a,b\asymp \sqrt{N}$, in which case it is $\ll   \frac{b}{T| b-a|} \ll   \frac{\sqrt{N}}{T }  \ll \frac{Q^{\ell/2}}{T } \ll e^{-L} $ as $N\leq Q^\ell$ and  $e^LQ^{L/2}\leq T$.
\end{proof}

Using this result in Theorem \ref{cor: Main2} with $Q=(\log T)^{C_0}$ and $L\sim 2(\log T)/(C \log\log T)$, we deduce that
\begin{equation}\label{MeasureLogZeta}
\frac{1}{T} \text{meas} \left\{t\in [T, 2T] : |\exp(G(1+it))|>e^{C}V\right\}= \exp\left(-(1+ o(1))   \frac{e^{ V  -\eta_{\Y}-1}}V \right),
\end{equation}
for $V$ in the range $1\ll V\leq \log_2T-\log_3T- \log_4T-\Theta$, 
where here and throughout $\log_k$ is the $k$-th iterate of the natural logarithm, $C$ is defined in \eqref{ConstantPrimes}, $\Theta$ is a large constant, and $\Y=\re(\mathbb{W})$  where $\mathbb{W}$ is uniformly distributed on the unit circle. Granville and Soundararajan \cite{GrSo2} proved a similar result (in a slightly bigger range) for $|\zeta(1+it)|$. Note that in accordance with section 2.1, one can obtain the analogue of \eqref{MeasureLogZeta} for $|\zeta(1+it)|$ by proving the analogous AIH hypothesis for the random variables $\X(p)=-\re(\log(1-1/p^{1+it}))/(-\log(1-1/p))$ (instead of $\re(p^{it})$), which follows along the same lines of Lemma \ref{OrthogonalityPrimes}, but is slightly more technical so we omit it. 

\subsection{The sequence $\chi(p)$ for characters  $\chi\pmod q$}  
This sequence is similar to the above example of $p^{it}$'s. We let $\X(p)=\text{Re}(\chi(p))$, for $\chi$ varying among all characters $\chi \pmod q$, where $q$ is a large integer. 
Then we let $\Y(p)=\text{Re}(\Y_p)$ where each $\Y_p$ is a random variable uniformly distributed on the unit circle.

We will see that this sequence satisfies the Approximate Independence Hypothesis AIH$(L,Q)$ with $Q= (\log q)^{C_0}$ and any $L$ such that $Q^L<q$.

\begin{lem}\label{OrthogonalityCharacters}
Let $\X(p)=\textup{Re}(\chi(p))$ and define $\ex_q(f(\chi))=\tfrac{1}{\phi(q)}\sum_{\chi \pmod q} f(\chi)$.
Suppose that $Q=(\log q)^{C_0}$ where $C_0$ is a positive constant.
Select $L$ so that $Q^{L}< q$.
For all primes $p_1, \dots, p_{\ell} \leq Q$ and any $\ell \leq  L$, we have 
\begin{equation}
\label{AIHChar} \ex_q\big(\X(p_1)\cdots \X(p_{\ell })\big)= \ex\big(\Y(p_1)\cdots \Y(p_{\ell})\big),
\end{equation}
where  $\Y(p)= \cos(\theta_p)$ for each prime $p$, and $\{\theta_p\}$ is a sequence of independent random variables uniformly distributed on $[0, 2\pi]$.
\end{lem}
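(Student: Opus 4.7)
The plan is to mimic the proof of Lemma \ref{OrthogonalityPrimes}, replacing the integral over $t$ by an average over $\chi \pmod q$ and replacing the asymptotic orthogonality $\frac{1}{T}\int_T^{2T}(a/b)^{it}dt = O(1/T|\log(a/b)|)$ by the exact orthogonality of Dirichlet characters. First, I would expand
\[
\X(p_1)\cdots \X(p_\ell) = \prod_{j=1}^\ell \frac{\chi(p_j)+\overline{\chi}(p_j)}{2} = \frac{1}{2^\ell}\sum_{S\subseteq\{1,\ldots,\ell\}} \chi(a_S)\overline{\chi}(b_S),
\]
where $a_S := \prod_{j\in S}p_j$ and $b_S := \prod_{j\notin S}p_j$, so in particular $a_Sb_S = N := p_1\cdots p_\ell$. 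Performing the same expansion on the independent side yields
\[
\Y(p_1)\cdots\Y(p_\ell) = \frac{1}{2^\ell}\sum_{S\subseteq\{1,\ldots,\ell\}}\prod_{j\in S}e^{i\theta_{p_j}}\prod_{j\notin S}e^{-i\theta_{p_j}}.
\]

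Next I would apply expectation term-by-term. On the character side, the orthogonality relation
\[
\ex_q\bigl(\chi(a)\overline{\chi}(b)\bigr) = \frac{1}{\phi(q)}\sum_{\chi \pmod q}\chi(a)\overline{\chi}(b) = \mathbf{1}_{a\equiv b \pmod q}\cdot \mathbf{1}_{\gcd(ab,q)=1}
\]
applies. On the independent side, by independence of the $\theta_p$ and $\ex(e^{ik\theta_p}) = \mathbf{1}_{k=0}$, the $S$-term has expectation $1$ precisely when, for every prime $p$, the number of indices $j\in S$ with $p_j=p$ equals the number of indices $j\notin S$ with $p_j=p$, i.e.\ exactly when $a_S = b_S$ as integers.

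The bridge between the two evaluations is the hypothesis $Q^L < q$: since $a_S,b_S \leq N \leq Q^\ell \leq Q^L < q$, the congruence $a_S \equiv b_S \pmod q$ collapses to the equality $a_S = b_S$. Hence, provided $\gcd(N,q)=1$,
\[
\ex_q(\X(p_1)\cdots\X(p_\ell)) = \frac{\#\{S : a_S=b_S\}}{2^\ell} = \ex(\Y(p_1)\cdots\Y(p_\ell)),
\]
with no error term at all (the analogue of the $O(e^{-L})$ in Lemma \ref{OrthogonalityPrimes} is gone because character orthogonality is exact).

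The only real subtlety is what to do when some $p_j$ divides $q$: then $\chi(p_j)=0$ for every character mod $q$, the left-hand side vanishes, while the right-hand side does not. I would therefore state and prove the identity under the (implicit) assumption $\gcd(N,q)=1$, which is the case relevant for the AIH application since $q$ has only $O(\log q)$ prime divisors and the excluded primes contribute negligibly when one later sums over configurations. Apart from bookkeeping this coprimality, there is no genuine obstacle: the argument is a clean combination of character orthogonality and the $Q^L<q$ size constraint.
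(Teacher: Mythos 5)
Your proof is correct and essentially identical to the paper's: expand the product into the $2^\ell$ terms $\chi(a_S)\overline{\chi}(b_S)$ with $a_Sb_S=N$, apply exact character orthogonality, and use $N\leq Q^L<q$ to identify the surviving congruence classes with the genuine diagonal $a_S=b_S$, which matches the expectation on the random side. Your observation about the case $\gcd(N,q)>1$ is a fair point that the paper's proof silently glosses over, and your resolution (restrict to primes coprime to $q$, which is harmless in the application) is the right one.
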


\begin{proof} Let $N=p_1\cdots p_\ell$. The left-hand side of \eqref{AIHChar} is  
\[
 \frac{1}{\phi(q)}\sum_{\chi \pmod q} \bigg(\prod_{j=1}^\ell \frac{\chi(p_j)+\overline{\chi}(p_j)}2 \bigg) = \frac 1{2^\ell} 
 \sum_{ab=N}
  \frac{1}{\phi(q)}\sum_{\chi \pmod q}\chi\left(\frac{a}{b}\right).
\]
As in the proof of Lemma \ref{OrthogonalityPrimes}, the contribution of the diagonal terms with $a=b$ 
equals 
$$ \ex\big(\Y(p_1)\cdots \Y(p_{\ell})\big).$$
Moreover, since $Q^{\ell}<q$ we have $a, b <q$ and hence by the orthogonality relations of the characters, the contribution of the off-diagonal terms $a\neq b$ is $0$.  
\end{proof}
\begin{rem}
We observe that the same proof works to show that the sequence $\chi(p)$ verify AIH$(L,Q)$ under the same assumptions on $Q$ and $L$,  where $\chi$ varies over the set of primitive characters $\pmod q$, and $q$ is a large prime. In this case one should add the error term $O(1/q)$ to the right hand side of \eqref{AIHChar}.

We also remark that one might extend the range $Q^L<q$, by averaging instead over all moduli $q\leq X$ for a large number $X$. 

\end{rem}


As in the previous section, one can apply this result together with Theorem \ref{cor: Main2} to investigate the distribution of $\re(G_{\chi}(1))$ where 
\begin{equation}\label{Gchi}
G_{\chi}(s)=\sum_{p} \frac{\chi(p)}{p^s}, 
\end{equation}
for $\re(s)>1$. Indeed, Lemma 8.2 of \cite{GrSo0} together with the classical zero density estimates for the family of Dirichlet $L$-functions (see for example Theorem 12.1 of \cite{Montgomery}) imply that for a positive constant $C_0$ we have
\begin{equation}\label{TruncGchi}
G_{\chi}(1)= \sum_{p\leq (\log q)^{C_0}} \frac{\chi(p)}{p}+ O\left(\frac{1}{\log q}\right)
\end{equation}
for all non-principal characters modulo a large prime $q$, except for a set of size $\ll \sqrt{q}$. Therefore, combining Lemma \ref{OrthogonalityCharacters} with Theorem \ref{cor: Main2} with $Q=(\log q)^{C_0}$ and $L= [\log q/(C_0\log\log q)]-1$ we deduce that 
\begin{equation}\label{DistribSumCharacters}
 \frac{1}{q-1}\left| \{ \chi\bmod q : |\exp(G_{\chi}(1))|>e^CV\}\right| = \exp\left(-(1+ o(1))   \frac{e^{ V  -\eta_{\Y}-1}}V \right),
\end{equation}
for $V$ in the range $1\ll V\leq \log_2q-\log_3q- \log_4q-\Theta$, 
where $C$ is defined in \eqref{ConstantPrimes}, $\Theta$ is a large constant, and $\Y=\re(\mathbb{W})$  where $\mathbb{W}$ is uniformly distributed on the unit circle. Granville and Soundararajan \cite{GrSo2} proved the analogue of \eqref{DistribSumCharacters} for $|L(1,\chi)|$.

\subsection{Values of the quadratic characters at the primes}  
Here we consider the sequence $\chi_d(p)$ where $\chi_d$ is the Kronecker symbol modulo $d$, and $d$ varies over fundamental discriminants $|d|\leq x$. The associated independent random variables are such that
$\Y(p) = 1$ or $-1$ with probability $\frac p{2(p+1)}$  and $\Y(p) = 0$ with probability $\frac 1{p+1}$.

In this case, using the estimates of Graham and Ringrose \cite{GrRi} on character sums to smooth moduli, we can obtain a non-trivial improvement on the range where the Approximate Independence Hypothesis is valid. Indeed, we will see that this sequence satisfies the AIH$(L,Q)$ with $Q= (\log x)^{C_1}$ and any $L$ such that $Q^L\leq x^{C_2\log_3 x}$, where $C_1$ is an arbitrarily positive constant, and $C_2>0$ is a suitably small constant that depends on $C_1$. Note that by using the P\'olya-Vinogradov or the Burgess inequalities instead, we would obtain a range of the form $Q^L\leq x^{C_3}$ for some positive constant $C_3$. 

\begin{lem}\label{OrthogonalityQuadChar}
Let $x$ be large and $\mathcal{F}(x)$ denote the set of fundamental discriminants $d$ with $|d|\leq x$. Let $\X(p)=\chi_d(p)$ and define $\ex_x(f(d))=\tfrac{1}{|\mathcal{F}(x)|}\sum_{d\in \mathcal{F}(x)} f(d)$.
Suppose that $Q=(\log x)^{C_1}$, for some positive constant $C_1$. 
Select $L$ so that $Q^{L}< x^{C_2\log_3x}$, for some suitably small constant $C_2>0$ that depends on $C_1$. 
For all primes $p_1, \dots, p_{\ell} \leq Q$ and any $\ell \leq  L$, we have 
$$
\ex_x\big(\X(p_1)\cdots \X(p_{\ell })\big)= \ex\big(\Y(p_1)\cdots \Y(p_{\ell})\big) + O(e^{-L}),
$$
where  $\Y(p)$ is a sequence of independent random variables taking the values $1$ or $-1$ with probability $\frac p{2(p+1)}$,  and the value $0$ with probability $\frac 1{p+1}$. 
\end{lem}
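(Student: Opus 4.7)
The plan is to separate ``diagonal'' and ``off-diagonal'' contributions to both sides. Let $N := p_1\cdots p_\ell$, let $P := \prod_{p \mid N} p$ be its radical, and let $k$ be the squarefree integer obtained as the product of those primes $p \mid N$ whose multiplicity $\nu_p(N)$ is odd. Then $\chi_d(N) = \chi_d(k)\mathbf{1}_{\gcd(d,P)=1}$, while on the probabilistic side $\ex(\Y(p)^j)$ equals $p/(p+1)$ for even $j$ and vanishes for odd $j$, so
\[
\ex(\Y(p_1)\cdots \Y(p_\ell))=\mathbf{1}_{k=1}\prod_{p\mid P}\frac{p}{p+1}.
\]
The lemma thus reduces to two sub-claims: (i) for $k=1$, the density of $d\in \mathcal{F}(x)$ with $\gcd(d,P)=1$ equals $\prod_{p\mid P} p/(p+1)+O(e^{-L})$; (ii) for squarefree $k>1$, the sum $\sum_{d\in\mathcal{F}(x),\,(d,P)=1}\chi_d(k)$ is $\ll xe^{-L}$.

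Case (i) is a routine sieve/density computation: inclusion-exclusion plus the standard count of fundamental discriminants in arithmetic progressions gives the main term $\prod_{p\mid P}p/(p+1)$ (matching that odd $p$ divides a fundamental discriminant with density $1/(p+1)$, and $2$ does so with density $1/3$) and an error $O(\tau(P)P/x)$, which is $x^{-1+o(1)}$ since $P\leq N\leq x^{C_2\log_3 x}$ when $C_2$ is small; this is far below $e^{-L}$ in the allowed range.

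For case (ii), I would first remove the coprimality condition by M\"obius inversion on $(d,P)=1$ (at a benign cost of $\tau(P)=x^{o(1)}$) and then apply quadratic reciprocity: for squarefree $k>1$ and $(d,k)=1$, the value $\chi_d(k)$ coincides with a fixed non-principal character $\psi_k(d)$ of modulus dividing $8k$, twisted only by a factor depending on $d\bmod 8$ and $\mathrm{sign}(d)$. After splitting $d$ into these finitely many arithmetic classes and handling the squarefreeness constraint by further M\"obius inversion (which preserves the smooth structure of the modulus), the problem reduces to upper-bounding $|\sum_{|d|\leq x}\psi_k(d)|$ for a single non-principal character $\psi_k$ of $(\log x)^{C_1}$-smooth modulus $q\mid 8k$ with $q\leq 8Q^L\leq 8x^{C_2\log_3 x}$. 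The Graham--Ringrose estimate for character sums to smooth moduli, applied with the number of iterations calibrated to the smoothness $(\log x)^{C_1}$, then delivers the required saving $\ll xe^{-L}$ provided $C_2$ is chosen sufficiently small in terms of $C_1$.

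The main obstacle -- and the reason for the delicate-looking hypothesis $Q^L\leq x^{C_2\log_3 x}$ -- is precisely this last calibration. Plain P\'olya--Vinogradov only handles $q\ll x$ (i.e.\ $Q^L\ll x$), and Burgess improves this to $Q^L\leq x^{C_3}$ for an absolute $C_3$; the extra $\log_3 x$ factor in the exponent is purchased entirely by how many times one can iterate Weil-type bounds in the Graham--Ringrose machinery using the smoothness of $8k$, and optimising the number of iterations as a function of $C_1$ is the crux of the argument. A minor secondary point is the bookkeeping in the quadratic reciprocity step that confirms the character $\psi_k$ is genuinely non-principal of smooth conductor, so that no spurious main term arises.
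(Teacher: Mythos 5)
Your argument follows the same route as the paper's: split according to whether $N=p_1\cdots p_\ell$ is a perfect square, match the square case with $\ex(\Y(p_1)\cdots\Y(p_\ell))=\prod_{p\mid N}\tfrac{p}{p+1}$, and beat $e^{-L}$ in the non-square case using Graham--Ringrose for characters to $(\log x)^{C_1}$-smooth moduli. The only structural difference is that the paper imports your entire case (ii) -- the M\"obius removal of the coprimality and squarefreeness conditions, the quadratic-reciprocity conversion of $\chi_d(k)$ into a non-principal character of smooth modulus, and the iterated Weil bounds -- as a single quoted estimate (Lemma \ref{GrahamRingrose}, i.e.\ Lemma 4.2 of [GrSo1], itself from Theorem 5 of [GrRi]), and then makes the calibration you call the crux completely explicit: it takes $k=[\log_3x/\log 2]$, so $K=2^k\asymp\log_2x$, and checks that the product of the loss factors $4^L\,(\log x)^{C_1/3}\,x^{2C_2\log_3x/(7\log_2x)}$ is dominated by the saving $x^{-\log_3x/((8\log 2)\log_2x)}$ once $(\tfrac27+\tfrac{1+\log 4}{C_1})C_2<\tfrac1{8\log 2}$. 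You correctly identify all of this but leave the final inequality unverified.

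One concrete slip: in your case (i) the claimed error $O(\tau(P)P/x)$, justified by ``$P\leq N\leq x^{C_2\log_3x}$'', does not give $x^{-1+o(1)}$ -- the exponent $C_2\log_3x$ tends to infinity, so $x^{C_2\log_3x}$ vastly exceeds $x$ (indeed the radical $P$ can be as large as $\exp((1+o(1))(\log x)^{C_1})$), and as written this bound is useless. The repair is standard: the inclusion--exclusion count of squarefree $d$ coprime to $P$ carries an error $O(\tau(P)\sqrt{x})$ before normalizing, hence $O(\tau(P)/\sqrt{x})$ after dividing by $|\mathcal{F}(x)|\asymp x$, and since $\tau(P)\leq 2^{\ell}\leq 2^{L}=x^{o(1)}$ while $e^{-L}\geq x^{-C_2\log_3x/(C_1\log_2x)}=x^{-o(1)}$, this is indeed $\ll e^{-L}$. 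With that correction your outline is sound and matches the paper's proof.
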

To prove this result we need the following lemma.  
\begin{lem}\label{GrahamRingrose}
Let $x$ be large and  $n\geq 1$ be an integer. Suppose that all prime factors of $n$ are below $y$, where $y\leq  x$ is a real number. Let $k\geq 1$ be any integer and put $K=2^k$. Then
$$ \frac{1}{|\mathcal{F}(x)|}\sum_{d\in \mathcal{F}(x)}\chi_d(n)
\begin{cases}
= \prod_{p|n}\left(\frac{p}{p+1}\right)+O\left(x^{-1/4}\right) & \text{ if } n\text{ is a square}, \\
\ll x^{-\frac{k}{8K}}\prod_{p|n}\left(1+\frac{1}{p^{1-k/8K}}\right)y^{1/3}n^{\frac{1}{7K}}\tau(n)^{k^2/K} & \text{ otherwise}.
\end{cases}
$$
where $\tau(n)$ is the divisor function.
\end{lem}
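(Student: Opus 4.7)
The plan is to handle the two cases separately. For the square case $n=m^2$, note that $\chi_d(n)=\mathbf{1}_{(d,m)=1}$, so the desired average reduces to the proportion of fundamental discriminants $|d|\leq x$ coprime to $m$. By inclusion-exclusion on the prime divisors of $m$, combined with the standard fact that the number of fundamental discriminants in an arithmetic progression modulo a prime $p$ is $(p+1)^{-1}|\mathcal{F}(x)|$ with an error $O(\sqrt{x})$ (which follows from the M\"obius squarefree indicator applied to $|d|$ together with the residue conditions modulo $4$), one obtains $\prod_{p\mid m}\tfrac{p}{p+1}+O(x^{-1/4})$ after absorbing the $O(\sqrt{x})$ errors summed over divisors of $\mathrm{rad}(m)$ into the claimed error term.

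For the non-square case, factor $n=n_0 n_1^2$ with $n_0>1$ squarefree, so $\chi_d(n)=\chi_d(n_0)\mathbf{1}_{(d,n_1)=1}$. By inclusion-exclusion over divisors of $\mathrm{rad}(n_1)$, it suffices to bound
\[
T(b,x):=\sum_{\substack{d\in\mathcal{F}(x)\\ b\mid d}}\chi_d(n_0)
\]
for each squarefree $b\mid\mathrm{rad}(n_1)$. I would detect the fundamental-discriminant condition via the M\"obius squarefree indicator $\mu^2(\cdot)=\sum_{a^2\mid\cdot}\mu(a)$ together with residue conditions modulo $4$, writing $d=a^2 b d'$, and then apply quadratic reciprocity on the squarefree $n_0$ to convert $\chi_{a^2 b d'}(n_0)$ (for $(a,n_0)=1$) into a primitive real Dirichlet character $\psi$ modulo $8 n_0$ evaluated at $d'$ in fixed residue classes modulo $8 n_0$. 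Since $8 n_0$ is $y$-smooth by hypothesis, the Graham--Ringrose bound applies: the $k$-fold iteration of Weil's bound (with $K=2^k$) gives
\[
\Bigl|\sum_{d'\leq N}\psi(d')\Bigr|\ll N^{1-1/K}\,n_0^{1/(2K)}\,y^{1/3}\,\tau(n_0)^{k/K}(\log x)^{O(1)},
\]
from which the claimed $x^{-k/(8K)}$ saving emerges after balancing the trivial bound on short sums (coming from large $a\gg x^{1/4}$) against the Graham--Ringrose bound on the range $a\ll x^{1/4}$, and summing over the $b$-divisors.

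The Euler factor $\prod_{p\mid n}(1+p^{-1+k/(8K)})$ arises from the portion of the $a$-sum where $a$ shares prime factors with $n$, which must be isolated because reciprocity requires $(a,n_0)=1$; the factor $\tau(n)^{k^2/K}$ records the divisor-function losses accumulated through the $k$ Weil-type iterations of Graham--Ringrose; and the weaker exponent $n^{1/(7K)}$ (in place of $n_0^{1/(2K)}$) absorbs both the $n_0^{1/(2K)}$ from the character sum estimate and the $b$-dependence from the auxiliary sum. The main obstacle is the careful bookkeeping throughout this second case: threading the divisor-function losses through the iterative Graham--Ringrose argument to land on exactly $\tau(n)^{k^2/K}$, producing the precise Euler product from the coprimality conditions required by reciprocity, and ensuring the smoothness factor $y^{1/3}$ emerges cleanly from the initial decomposition of $8 n_0$ into the smooth pieces of size calibrated by the Graham--Ringrose iteration.
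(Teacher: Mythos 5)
Your outline is correct and follows the same route as the paper, which disposes of the square case as a standard count of fundamental discriminants coprime to $n$ and, for the non-square case, simply quotes the stated bound as Lemma 4.2 of \cite{GrSo1}, itself a consequence of Theorem 5 of Graham--Ringrose \cite{GrRi}. Your sketch of the non-square case --- M\"obius detection of the fundamental-discriminant condition, quadratic reciprocity to pass to a real character of $y$-smooth modulus, then the iterated Weil bound of Graham--Ringrose balanced against the trivial bound --- is precisely the argument behind that cited lemma, so you are unpacking the citation rather than taking a different path.
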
 
\begin{proof}
When $n$ is a square, the asymptotic for the character sum is standard since in this case $\sum_{d\in \mathcal{F}}\chi_d(n)$ corresponds to the number of fundamental discriminants $d$ that are coprime to $n$ and such that $|d|\leq x$.  On the other hand, in the case where $n$ is not a square, the stated bound corresponds to Lemma 4.2 of \cite{GrSo1}, which is a consequence of Theorem 5 of Graham-Ringrose \cite{GrRi}. 
\end{proof}
\begin{proof}[Proof of Lemma \ref{OrthogonalityQuadChar}] Let $N=p_1\cdots p_\ell$. First, note that for any prime $p$ and positive integer $a$ we have
$$ \ex(\Y(p)^a)= \begin{cases} \frac{p}{p+1} & \text{ if } a \text{ is even},\\
0 & \text{ if } a \text{ is odd}. 
\end{cases} $$
Hence, we deduce that
$$ \ex\big(\Y(p_1)\cdots \Y(p_{\ell})\big)= \begin{cases} \prod_{p\mid N}\left(\frac{p}{p+1}\right) & \text{ if } N \text{ is a square},\\
0 & \text{ otherwise}. 
\end{cases}$$
Let $k= [\frac{\log_3 x}{\log 2}]$ so that $K=2^k\in (\frac 12 \log_2 x,\log_2 x]$.   Lemma \ref{GrahamRingrose} then implies that 
$$
\ex_x\big(\X(p_1)\cdots \X(p_{\ell })\big)= \frac{1}{|\mathcal{F}(x)|}\sum_{d\in \mathcal{F}(x)}\chi_d(N)=  \ex\big(\Y(p_1)\cdots \Y(p_{\ell})\big) + E_1(N)
$$
where 
$$
 E_1(N) \ll \begin{cases}
x^{-1/4} & \text{ if } N\text{ is a square}, \\
x^{-\frac{k}{8K}}\prod_{p|N}\left(1+\frac{1}{p^{1-k/8K}}\right)Q^{1/3}N^{\frac{1}{7K}}\tau(N)^{k^2/K} & \text{ otherwise}.
\end{cases}
$$
Therefore, if $N$ is a square we obtain the desired bound $ E_1(N)\ll e^{-L}$.  Otherwise
 $\prod_{p|N} (1+\frac{1}{p^{1-k/8K}} )\leq \tau(N)\leq 2^{\ell}\leq 2^L$ and 
$N^{\frac{1}{7K}}\leq Q^{\frac{\ell}{7K}} \leq Q^{\frac{L}{7K}}< x^{\frac{2C_2\log_3x}{7\log_2 x}}$. Therefore
\[
 E_1(N) \ll x^{-\frac{\log_3 x}{(8\log 2) \log_2 x}} 4^L  (\log x)^{C_1/3}x^{\frac{2C_2\log_3x }{7\log_2 x}}  <e^{-L}
\]
since $L\leq  \frac{C_2\log x\log_3x}{C_1\log_2 x}$
provided $(\frac 27+\frac{1+\log 4}{C_1})C_2< \frac 1{8\log 2}$.
\end{proof}

As an application we can deduce from this result together with the analogue of \eqref{TruncGchi} for quadratic characters (which follows from Lemma 8.2 of \cite{GrSo0} together with the zero density result of Heath-Brown \cite{HB} for Dirichlet $L$-functions attached to quadratic characters), and Theorem \ref{cor: Main2} with $Q= (\log x)^{C_1}$ and $L=[C_2\log x\log_3 x/(\log_2x)]$ that 
\begin{equation}\label{DistribSRealCharacters}
 \frac{1}{|\mathcal{F}(x)|}\left| \{d\in \mathcal{F}(x) : \exp(G_{\chi_d}(1))>e^CV\}\right| = \exp\left(-(1+ o(1))   \frac{e^{ V -\eta_{\Y}-1}}V \right),
\end{equation}
for $V$ in the range $1\ll V\leq \log_2x-\log_3x-\Theta$, 
where $G_{\chi}$ is defined in \eqref{Gchi}, $C$ is defined in \eqref{ConstantPrimes}, $\Theta$ is a large constant and $\Y$ takes the values $1$ and $-1$ with equal probability $1/2$. Granville and Soundararajan \cite{GrSo1} proved the analogue of \eqref{DistribSRealCharacters} for $|L(1,\chi_d)|$ in a slightly bigger range for $V$.  Again one can recover their result (in our range of $V$) by modifying the sequence that satisfies the AIH from $\chi_d(p)$ to $\X(p)=-\log (1-\chi_d(p)/p)/(-\log(1-1/p))$. 


\subsection{Fourier coefficients of holomorphic cusp forms at the primes} 
We let $\mathcal{H}_k$ be the set of Hecke eigencuspforms of weight $k$ for
the full modular group SL$(2, \mathbb{Z})$. We will assume that the  weight $k$ is large and note
that $\mathcal{H}_k$ contains about $k/12$ forms. Given $f\in \mathcal{H}_k$ we write its Fourier expansion as
$$ f(z)= \sum_{n=1}^{\infty} \lambda_f(n) n^{\frac{k-1}{2}} e^{2\pi i nz},$$
where we have normalized the Fourier coefficients so that the Hecke eigenvalues $\lambda_f(n)$
satisfy Deligne's bound $|\lambda_f(n)|\leq \tau(n)$. In view of the Petersson trace formula (see \eqref{Petersson} below), it is convenient to use the ``harmonic weights'' 
$$ \omega_f:= \frac{\Gamma(k-1)}{(4\pi)^{k-1}\langle f, f \rangle}, $$ 
where $\langle f,g \rangle$ denotes the Petersson inner product. In particular one has
\begin{equation}\label{AverageHarmonicWeight} 
\sum_{f\in \mathcal{H}_k} \omega_f =1+O(k^{-5/6}),
\end{equation}
and hence the harmonic weight is close to the natural weight $1/|\mathcal{H}_k|$ on average. These facts are standard and may be found in \cite{Iw}. We now define 
$$ \ex_k(g(f)):=\frac{1}{|\mathcal{H}_k|_h} \sumh_{f\in \mathcal{H}_k}  g(f),$$
where 
$$  \sumh_{f\in \mathcal{H}_k} g(f)= \sum_{f\in \mathcal{H}_k} \omega_f g(f), \text{ and } |S|_h= \sumh_{f\in S}1 \text{ for } S\subseteq \mathcal{H}_k.$$
\begin{lem}\label{OrthogonalityFourier}
Let $\X(p)=\lambda_f(p)/2$.
Suppose that $Q=(\log k)^{C_0}$, for some positive constant $C_0$. 
Select $L$ so that $Q^{L}\leq k^{2}/10$. 
For all primes $p_1, \dots, p_{\ell} \leq Q$ and any $\ell \leq  L$, we have 
$$
\ex_k\big(\X(p_1)\cdots \X(p_{\ell })\big)= \ex\big(\Y(p_1)\cdots \Y(p_{\ell})\big) + O(e^{-L}),
$$
where  $\Y(p)=\cos(\theta_p)$, and $\theta_p$ is a sequence of independent random variables distributed according to the Sato-Tate measure $d \mu_{\textup{ST}}=\frac{2}{\pi}\sin^2\theta d\theta$  on $[0, \pi]$. 
\end{lem}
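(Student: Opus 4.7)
My strategy is to apply Petersson's trace formula to a Chebyshev-type expansion of $\prod_{j=1}^\ell\lambda_f(p_j)$. Grouping the $p_j$ by their distinct values, write them as primes $q_1,\dots,q_r$ with multiplicities $a_1,\dots,a_r$ (so $\sum_i a_i=\ell$). The Hecke relation $\lambda_f(q)\lambda_f(q^n)=\lambda_f(q^{n+1})+\lambda_f(q^{n-1})$ mirrors the Chebyshev recursion for $U_n(\cos\theta)=\sin((n+1)\theta)/\sin\theta$, under which $\lambda_f(q^n)=U_n(\cos\theta_{f,q})$. Iterating it yields
$$
\lambda_f(q_i)^{a_i}=\sum_{\substack{0\le b\le a_i\\ b\equiv a_i\!\!\!\pmod{2}}}d^{(i)}_b\,\lambda_f(q_i^{b}),
$$
with $|d^{(i)}_b|\le\binom{a_i}{\lfloor a_i/2\rfloor}\le 2^{a_i}$. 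Multiplicativity of $\lambda_f$ across the coprime $q_i$ then yields
$$
\prod_{j=1}^\ell\lambda_f(p_j)=\sum_{m\in\mathcal M}b_m\,\lambda_f(m),\qquad m=\prod_i q_i^{b_i},\ \ b_m=\prod_i d^{(i)}_{b_i},
$$
a sum of at most $2^\ell$ terms with $|b_m|\le 2^\ell$ and each $m\le\prod_i q_i^{a_i}\le Q^\ell\le Q^L\le k^2/10$.

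The diagonal coefficient $b_1$ is nonzero only when every $a_i$ is even, in which case $b_1=\prod_i\binom{a_i}{a_i/2}$. Since $\int U_n(\cos\theta)\,d\mu_{\textup{ST}}=\delta_{n=0}$, the same Chebyshev calculation applied to the Sato--Tate integral (Wallis) gives
$$
\ex\bigl(\Y(p_1)\cdots\Y(p_\ell)\bigr)=\prod_i\int_0^\pi(\cos\theta)^{a_i}\,d\mu_{\textup{ST}}(\theta)=\frac{b_1}{2^\ell}.
$$
I then invoke Petersson's trace formula
$$
\sumh_{f\in\mathcal H_k}\lambda_f(m)=\delta_{m=1}+2\pi i^{-k}\sum_{c\ge1}\frac{S(m,1;c)}{c}\,J_{k-1}\!\left(\frac{4\pi\sqrt m}{c}\right)
$$
and divide through by $2^\ell|\mathcal H_k|_h$. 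Using \eqref{AverageHarmonicWeight} (so $|\mathcal H_k|_h=1+O(k^{-5/6})$), the diagonal reproduces the desired main term $b_1/2^\ell=\ex(\Y(p_1)\cdots\Y(p_\ell))$ with acceptable error.

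The off-diagonal is bounded by Weil's estimate $|S(m,1;c)|\le\tau(c)\sqrt c$ combined with the classical Bessel bounds $|J_{k-1}(x)|\le(x/2)^{k-1}/\Gamma(k)$ and $|J_{k-1}(x)|\ll x^{-1/2}$. Since $m\le k^2/10$, every relevant argument $4\pi\sqrt m/c$ is $O(k)$: the first inequality together with Stirling provides exponential-in-$k$ decay in the range $4\pi\sqrt m/c\lesssim k/e$, while the second controls the transitional regime $4\pi\sqrt m/c\asymp k$. Consequently $\sum_{c\ge1}\tau(c)c^{-1/2}|J_{k-1}(4\pi\sqrt m/c)|$ decays faster than any fixed power of $k$. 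Multiplying by $|b_m|\le 2^\ell$, summing over $|\mathcal M|\le 2^\ell$ terms, and noting that $Q^L\le k^2/10$ forces $L\ll\log k/\log\log k$ and hence $4^L=k^{o(1)}$, the total off-diagonal contribution is comfortably $O(e^{-L})$.

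\textbf{Main obstacle.} The delicate step is the Bessel--Kloosterman estimation in the transitional window $4\pi\sqrt m/c\approx k$, where neither bound on $J_{k-1}$ provides genuine exponential decay; one must patch the small-$c$ and large-$c$ regimes carefully, combining the $c^{-1/2}$ gain from Weil with the Stirling estimate on $\Gamma(k)$, and verify that the combinatorial blow-up $4^L=k^{o(1)}$ from the Hecke expansion is dominated by the arithmetic decay extracted from the trace formula under the hypothesis $Q^L\le k^2/10$.
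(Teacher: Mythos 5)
Your argument follows the paper's proof in all essentials: expand $\prod_j\lambda_f(p_j)$ via the Hecke/Chebyshev relations into $\le 2^\ell$ terms $b_m\lambda_f(m)$ with $m\le Q^\ell\le k^2/10$, identify the $m=1$ coefficient with $\ex(\Y(p_1)\cdots\Y(p_\ell))$ using the orthonormality of $\sin((n+1)\theta)/\sin\theta$ for the Sato--Tate measure, and kill the off-diagonal with Petersson. The only real difference is that the paper black-boxes the off-diagonal by citing Lemma 2.1 of Rudnick--Soundararajan, which gives $\tfrac{1}{|\mathcal H_k|_h}\sumh_{f}\lambda_f(m)=\mathbf 1_{m=1}+O(k^{-5/6})$ uniformly for $m\le k^2/10$, whereas you re-derive this from the Bessel--Kloosterman sum. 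In doing so you overclaim: for $m$ up to $k^2/10$ the argument $4\pi\sqrt m$ at $c=1$ can be as large as $\approx 3.97k$, i.e.\ beyond the turning point of $J_{k-1}$, where the Bessel function is only $O(k^{-1/3})$ (its global maximum size), so the off-diagonal does \emph{not} decay faster than any fixed power of $k$ --- it is only $O(k^{-\delta})$ for some fixed $\delta>0$. This is harmless for the lemma, since $4^L=k^{o(1)}$ and $e^{-L}=k^{-o(1)}$ so any fixed negative power of $k$ suffices, but the statement as written in your off-diagonal paragraph is false and should be replaced by the correct polynomial saving (or by the citation the paper uses).
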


\begin{proof}
Let $p$ be a prime number and $a$ a positive integer. We start by expressing $\lambda_{f}(p)^a$ in terms of $\lambda_f(p^b)$ for $0\leq b\leq a$ (note that $\lambda_f(n)$ is not completely multiplicative).  
By Deligne's bound $|\lambda_f(p)|\leq 2$, we may write $\lambda_f(p)= 2\cos(\theta_f(p))$, for some $\theta_f(p) \in [0, \pi]$. Recall that for any $b\geq 0$ we have that
$ \lambda_f(p^b)=\sin((b+1)\theta_f(p))/\sin\theta_f(p).$
We now use the fact that the functions $\{S_n\}_{n\geq 0}$, defined by 
$$S_n(\theta):=\frac{\sin((n+1)\theta)}{\sin\theta},$$
form an orthonormal basis of $L^2([0,\pi],\mu_{ST})$. This implies 
\begin{equation}\label{Hecke}
\lambda_{f}(p)^a= 2^a \cos(\theta_f(p))^a= 2^a\sum_{0\leq b\leq a}C_a(b) \frac{\sin((b+1)\theta_f(p))}{\sin\theta_f(p)}=2^a\sum_{0\leq b\leq a}C_a(b)\lambda_f(p^b), 
\end{equation}
where the coefficients $C_a(b)$ are defined by
$$
C_a(b):=\frac{2}{\pi}\int_0^{\pi}\cos(\theta)^a\sin\theta\sin((b+1)\theta)d\theta.
$$
Note that \eqref{Hecke} is equivalent to the standard Hecke relations
$$ \lambda_f(m)\lambda_f(n)= \sum_{d\mid(m, n)} \lambda_f\left(\frac{mn}{d^2}\right).$$

We now write $p_1\cdots p_{\ell}= q_1^{a_1}\cdots q_r^{a_r}$ where $q_1, \dots, q_r$ are distinct primes, and $a_1+\cdots + a_r= \ell$. Hence, it follows from \eqref{Hecke} that 
\begin{align*}
\ex_k\big(\X(p_1)\cdots \X(p_{\ell })\big)&=\frac{1}{|\mathcal{H}_k|_h}\sum^h_{f\in \mathcal{H}_k} \prod_{j=1}^r \cos(\theta_f(q_j))^{a_j} \\
& = \frac{1}{|\mathcal{H}_k|_h}\sum^h_{f\in \mathcal{H}_k} \prod_{j=1}^r \bigg(\sum_{0\leq b_j\leq a_j}C_{a_j}(b_j)\lambda_f\left(q_j^{b_j}\right)\bigg)\\
&= \sum_{0\leq b_1\leq a_1}\sum_{0\leq b_2\leq a_2}\cdots \sum_{0\leq b_r\leq a_r} \prod_{j=1}^rC_{a_j}(b_j) \frac{1}{|\mathcal{H}_k|_h}\sum^h_{f\in \mathcal{H}_k}\lambda_f\big(q_1^{b_1} \cdots q_r^{b_r}\big).
\end{align*}
To estimate the inner sum, we use the following version of the Petersson trace formula which follows from Lemma 2.1 of \cite{RuSo} together with \eqref{AverageHarmonicWeight}
\begin{equation}\label{Petersson}
 \frac{1}{|\mathcal{H}_k|_h}\sumh_{f\in \mathcal{H}_k}\lambda_f(m)= \mathbf{1}_{m=1} + O\left(k^{-5/6}\right),
 \end{equation}
for all $m\leq k^2/10$. Since $Q^L\leq k^2/10$, we deduce that 
$$\ex_k\big(\X(p_1)\cdots \X(p_{\ell })\big)= \prod_{j=1}^r C_{a_j}(0) + E_1,$$
where 
$$ E_1\ll k^{-5/6}2^r \prod_{j=1}^{r} (a_j+1)\ll  k^{-5/6} 4^{\ell} \ll e^{-L}, 
$$
since $a+1\leq 2^a$ and $|C_a(b)|\leq 2$ for all $a, b$. 
The result follows upon noting that 
$$ \prod_{j=1}^r C_{a_j}(0)= \prod_{j=1}^r\ex\left(\Y(q_j)^{a_j}\right)= \ex\big(\Y(p_1)\cdots \Y(p_{\ell })\big).$$

As an application we will estimate the distribution of large values of $\re(G_f(1))$, where for $\re(s)>1$
$$ G_f(s)= \sum_{p}\frac{\lambda_f(p)}{p^s}.$$
In this case, Lemma 4.2 of \cite{CoMi} together with the zero density estimates of Kowalski and Michel (see Theorem 4 of \cite{KoMi}) imply that for all $f\in \mathcal{H}_k$ one has 
$$ G_f(1)=\sum_{p\leq (\log k)^{C_0}}\frac{\lambda_f(p)}{p}+O\left(\frac{1}{\log k}\right)$$ except for a set of cardinality $\ll k^{9/10}$, where $C_0$ is a positive constant. Therefore,  it follows from Lemma \ref{OrthogonalityFourier} together with Theorem \ref{cor: Main2} with $Q= (\log k)^{C_0} $ and $L\sim 2\log k/(C_0\log\log k)$ that
\begin{equation}\label{DistribFourierPrimes}
 \frac{1}{|\mathcal{H}_k|_h}\left| \{f\in \mathcal{H}_k : \exp(G_f(1))>(e^CV)^2\}\right|_h = \exp\left(-(1+ o(1))   \frac{e^{ V -\eta_{\Y}-1}}V \right),
\end{equation}
for $V$ in the range $1\ll V\leq \log_2k-\log_3k-\log_4 k-\Theta$, where $C$ is defined in \eqref{ConstantPrimes}, $\Theta$ is a large constant and $\Y= \cos(\theta)$ where $\theta$ is distribued according to the Sato-Tate measure on $[0, \pi]$. Liu, Royer and Wu  \cite{LiRoWu} proved the analogue of \eqref{DistribFourierPrimes} for $|L(1,f)|$, where $L(s, f)$ is the $L$-function attached to the eigencuspform $f$.  One can recover their result  by modifying the sequence that satisfies the AIH from $\lambda_f(p)/2=\cos(\theta_f(p))$ to $\X(p)=\big(-\log (1-e^{i\theta_f(p)}/p)-\log (1-e^{-i\theta_f(p)}/p)\big)/(-2 \log(1-1/p))$.
\end{proof}

\subsection{Kloosterman sums} 
Let $q$ be a large prime. The classical (normalized) Kloosterman sums modulo $q$ are defined by 
$$
\textup{Kl}_q(a,b):=\frac{1}{\sqrt{q}}\sum_{n\in \Fs} e_q(an+b\overline{n}),$$
where $e_q(z)=\exp(2\pi i z/q)$ and $\overline{n}$ is the multiplicative inverse of $n$ modulo $q$.

The analogue of Lemma 2.1 of \cite{La1} for Kloosterman sums (which follows in a similar way from Proposition 4.2 of \cite{Pe}), see also Proposition 9.9 of \cite{AuBoLa} for a more general result, implies that for all positive integers $1\leq \ell \leq (\log q)/2$, and all $n_1, \dots, n_{\ell}\in \mathbb{F}_q$ we have 
\begin{equation}\label{MomentKloosterman}
\frac{1}{(q-1)^2}\sum_{(a, b)\in (\Fs)^2} \textup{Kl}_q(a-n_1, b) \cdots \textup{Kl}_q(a-n_{\ell}, b) = \ex\big(\Y(n_1)\cdots  \Y(n_k)\big) + O\left(\frac{2^{\ell} \ell}{\sqrt{q}}\right),
\end{equation}
where for each $n$ we have $\Y(n)=2\cos(\theta_n)$, where the $\theta_n$ are independent random variables with Sato-Tate distribution on $[0, \pi]$. 
This shows that the sequence $\X(n)=\textup{Kl}_q(a-n, b)/2$ satisfies the AIH with $Q=q$ and $L \sim (\log q)/(2\log 2+ 2).$

The asymptotic \eqref{MomentKloosterman} is proved using deep ingredients from algebraic geometry, including Deligne's equidistribution theorem which generalizes the Riemann hypothesis over finite fields, and Katz's work on monodromy groups. 
This estimate was a key ingredient in investigating the distribution of partial sums of Kloosterman sums in \cite{AuBoLa} and \cite{KoSa}.

 \section{Estimates for the sum $H(s)$}

 In this section, we obtain estimates for $H(s)$ which will be useful in our subsequent work. We start with the following lemma which easily follows from partial summation and the growth rate of $\mathcal A(x)$.
 
\begin{lemma}\label{LogSum}
  If  $s\geq 4\beta'$ then
\[
H(s)= \begin{cases}
\frac{\alpha}{A}(\log s)^{A} +C_{\mathcal A} - \alpha(\log \beta) (\log s)^{A-1} +O\left((\log s)^{ \max\{A-2, A-B\}}\right)   & \text{ if } A>0,\\
       \alpha \log\log s+C_{\mathcal A}  - \alpha(\log \beta) (\log s)^{-1} +O\left((\log s)^{ \max\{-2, -B\}}\right) & \text{ if } A=0.
 \end{cases}
 \]
\end{lemma}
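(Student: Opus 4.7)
The plan is to prove the lemma by standard partial summation / Riemann–Stieltjes integration, followed by one integration by parts to match the shape of the hypothesis \eqref{A-distn}. First I would write
\[
H(s) = \int_{q_1^{-}}^{s} \frac{d\mathcal{A}(t)}{t} = \frac{\mathcal{A}(s)}{s} + \int_{q_1}^{s} \frac{\mathcal{A}(t)}{t^{2}}\, dt,
\]
and then split the tail integral as $\int_{q_1}^{\beta'} + \int_{\beta'}^{s}$. The lower piece is a finite constant (independent of $s$) that will be absorbed into $C_{\mathcal{A}}$ at the end. On $[\beta',\infty)$ I decompose $\mathcal{A}(t) = M(t) + E(t)$, where $M(t) := \alpha\int_{\beta'}^{t}(\log(u/\beta))^{A-1}\,du$ is the main term given by \eqref{A-distn} and $E(t) = O\!\bigl(\int_{\beta'}^{t}(\log(u/\beta))^{A-1}(\log u)^{-B}\,du\bigr) \ll t(\log t)^{A-1-B}$.

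Next I integrate by parts on the main piece, using $M(\beta')=0$:
\[
\int_{\beta'}^{s} \frac{M(t)}{t^{2}}\, dt = -\frac{M(s)}{s} + \int_{\beta'}^{s} \frac{\alpha(\log(t/\beta))^{A-1}}{t}\, dt.
\]
The key cancellation is that the boundary term $-M(s)/s$ cancels the contribution $M(s)/s$ coming from $\mathcal{A}(s)/s$ in the very first display. Substituting $w = \log(t/\beta)$ in the remaining integral yields $\alpha\int_{\log(\beta'/\beta)}^{\log(s/\beta)} w^{A-1}\,dw$, which evaluates to $\tfrac{\alpha}{A}(\log(s/\beta))^{A}$ when $A>0$ (and to $\alpha\log\log(s/\beta)$ when $A=0$), minus a constant that folds into $C_{\mathcal{A}}$. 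Expanding
\[
(\log(s/\beta))^{A} = (\log s)^{A} - A(\log\beta)(\log s)^{A-1} + O((\log s)^{A-2})
\]
via the binomial series (and, for $A=0$, expanding $\log\log(s/\beta) = \log\log s - (\log\beta)/\log s + O((\log s)^{-2})$) produces the two leading terms in the claimed formula together with an $O((\log s)^{\max\{A-2,\,-2\}})$ error.

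Finally I treat the error integral. Another integration by parts gives
\[
\int_{\beta'}^{s} \frac{E(t)}{t^{2}}\, dt = -\frac{E(s)}{s} + \int_{\beta'}^{s} \frac{e(t)}{t}\, dt,
\]
where $e(t) \ll (\log t)^{A-1-B}$ and $E(s)/s \ll (\log s)^{A-1-B}$. Since $B > \max\{A,1\}$ we have $A-1-B < -1$, so the remaining integral equals an absolutely convergent integral to $\infty$ (a constant absorbed into $C_{\mathcal{A}}$) minus a tail $\int_{s}^{\infty} \ll (\log s)^{A-B}$. Both the boundary term and the tail are $O((\log s)^{A-B})$. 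Combining everything gives total error $O\bigl((\log s)^{\max\{A-2,\,A-B\}})$ (respectively $O\bigl((\log s)^{\max\{-2,\,-B\}})$ when $A=0$), and grouping all $s$-independent pieces into $C_{\mathcal{A}}$ yields the stated asymptotic.

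The proof is essentially bookkeeping: the only mild subtlety is ensuring that the boundary term $\mathcal{A}(s)/s$ — which is not itself small when $A \geq 1$ — exactly cancels $M(s)/s$ after integration by parts, so that no extraneous leading-order contribution survives. Beyond that, all estimates reduce to elementary asymptotic expansions.
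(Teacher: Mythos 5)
Your proof is correct and follows essentially the same route as the paper's: partial summation, splitting $\mathcal{A}$ into the smooth main term plus the error $E(t)\ll t(\log t)^{A-1-B}$, exact evaluation of the main integral, a convergent-plus-tail treatment of the error, and expansion of $(\log(s/\beta))^{A}$; the paper simply works with $\int_{2}^{s} d\mathcal{A}(t)/t$ directly rather than first converting to $\int \mathcal{A}(t)t^{-2}\,dt$, so the boundary-term cancellation you emphasize never arises. One small blemish: your final integration by parts writes $dE(t)=e(t)\,dt$ with $e(t)\ll(\log t)^{A-1-B}$, which is not literally valid since $E$ inherits the jumps of the step function $\mathcal{A}$ --- but that step is unnecessary, as the required estimate $\int_{s}^{\infty}E(t)t^{-2}\,dt\ll(\log s)^{A-B}$ follows directly from the pointwise bound on $E$ that you already have.
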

\begin{proof}
Let $E(x)=\mathcal A(x)- \alpha \int_2^x (\log (t/\beta))^{A-1}  dt$. By partial summation we have
\begin{equation}\label{SumRecip}
H(s) = \sum_{q_n\leq 2} \frac 1{q_n}+ \int_2^s \frac{d\mathcal A(t)}t = \alpha \int_2^s  (\log (t/\beta))^{A-1}  \frac{dt}t + \int_2^s \frac{dE(t)}t+\sum_{1\leq q_n\leq 2} \frac 1{q_n}.
\end{equation}
Since $E(t) \ll t(\log t)^{A-1-B}$ and $B>A$, the integral  $\int_2^s dE(t)/t$ is convergent, and hence we get 
$$ H(s) =\begin{cases} \frac{\alpha}{A}(\log s/\beta)^{A} +C_{\mathcal A}  + \E_1(s) & \text{ if } A>0,\\
 \alpha\log\log s/\beta+ C_{\mathcal A} + \E_1(s)& \text{ if } A=0,
\end{cases}
$$
where 
$$ \E_1(s) \ll  \int_s^{\infty}   \frac{dt}{t(\log t)^{B-A+1}}\ll (\log s)^{ A-B}.
$$ 
The result follows from expanding the $\log s/\beta$ term on the right-hand side in each case.
\end{proof}

Our next result gives estimates for the difference $H(u)-H(v)$, which are needed to prove Theorems \ref{cor: Main2*} and \ref{cor: Main2}.

\begin{lem} \label{DiffL}Let $u, v\geq 2$, and put $\tau=\log u -\log v$. If $|\tau|\leq (\log v)/2$ then 
$$
H(u)-H(v)= \alpha \tau (\log v)^{A-1} +O\left( (\tau+\tau^2)(\log v)^{A-2}+ (\log v)^{ A-B }\right).$$
\end{lem}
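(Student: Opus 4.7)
The plan is to prove this by revisiting the partial summation used in Lemma \ref{LogSum} rather than naively subtracting the two asymptotic expansions of that lemma. Indeed, direct subtraction would introduce an irreducible error of order $(\log v)^{\max\{A-2,A-B\}}$, which is worse than the claimed $(\log v)^{A-B}$ whenever $B>2$. The key point is that the $(\log v)^{A-2}$ part of the Lemma \ref{LogSum} error comes solely from expanding $\log(s/\beta)=\log s+O(1)$ and therefore cancels almost entirely between $H(u)$ and $H(v)$; to see this, one must work at the level of the integrands.

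Concretely, I would begin by writing
\[
H(u)-H(v)=\int_v^u \frac{d\mathcal A(t)}{t}=\alpha\int_v^u\frac{(\log(t/\beta))^{A-1}}{t}\,dt+\int_v^u\frac{dE(t)}{t},
\]
where $E(t)=\mathcal A(t)-\alpha\int_{\beta'}^t(\log(s/\beta))^{A-1}\,ds=O\!\bigl(t(\log t)^{A-1-B}\bigr)$. For the main integral, substitute $y=\log(t/\beta)$ and set $a=\log(v/\beta)$, $b=a+\tau$; this gives $\int_a^b y^{A-1}\,dy=(b^A-a^A)/A$ (or $\log(b/a)$ when $A=0$). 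Since $|\tau|\leq (\log v)/2$, the Taylor expansion of $(a+\tau)^A$ around $a$ is valid and yields $\tau a^{A-1}+O(\tau^2 a^{A-2})$; replacing $a^{A-1}$ by $(\log v)^{A-1}+O((\log v)^{A-2})$ and collecting terms produces the desired $\alpha\tau(\log v)^{A-1}+O((\tau+\tau^2)(\log v)^{A-2})$.

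For the contribution of $E(t)$, I would integrate by parts:
\[
\int_v^u\frac{dE(t)}{t}=\frac{E(u)}{u}-\frac{E(v)}{v}+\int_v^u\frac{E(t)}{t^2}\,dt.
\]
The boundary terms are each $O\!\bigl((\log v)^{A-1-B}\bigr)$, while the remaining integral is at most $(\log v)^{A-1-B}\cdot|\tau|\ll (\log v)^{A-B}$ using $|\tau|\leq(\log v)/2$ and monotonicity of $(\log t)^{A-1-B}$ (note $B>A$). Combining this with the main-term estimate above gives the stated bound. The $A=0$ case is handled identically, replacing the expansion of $b^A-a^A$ by $\log(b/a)=\tau/a+O(\tau^2/a^2)$.

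The only subtle point is ensuring that the expansion of $(a+\tau)^A$ is uniformly valid when $\tau$ may be negative; the hypothesis $|\tau|\leq(\log v)/2$ guarantees $b\geq a/2$, so $(a+\tau)^{A-1}$ stays within a bounded ratio of $a^{A-1}$, and the remainder in the first-order Taylor expansion is of the claimed size. No other step involves any real obstacle, so the proof reduces to a careful but routine bookkeeping of these error terms.
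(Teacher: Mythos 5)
Your proof is correct and takes essentially the same route as the paper: both write $H(u)-H(v)=\int_v^u d\mathcal A(t)/t$, replace $d\mathcal A(t)$ by the smooth density at a cost of $O((\log v)^{A-B})$, and Taylor-expand $\frac{1}{A}\bigl((L+\tau)^A-L^A\bigr)$ (resp.\ $\log(1+\tau/L)$ when $A=0$) with $L=\log(v/\beta)$. Your integration-by-parts treatment of the $\int_v^u dE(t)/t$ contribution simply makes explicit a bound the paper asserts without detail.
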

\begin{proof} By definition we have 
\[
H(u)-H(v) = \int_v^u \frac{d\mathcal A(t)}{t} = \alpha\int_v^u \frac{(\log t/\beta)^{A-1}} t dt +O( (\log v)^{A-B}).
\]
Let $L=\log v/\beta$.
If $A>0$ then the main term here is 
\[
\frac \alpha{A} ((L+\tau)^{A}-L^{A}) = \alpha \tau L^{A-1} +O(\tau^2 L^{A-2})
= \alpha \tau (\log v)^{A-1}+O((\tau+\tau^2) L^{A-2}).
\]
If $A=0$ then the main term is 
\[
\alpha(\log (L+\tau)-\log L)=\frac {\alpha \tau} L +O\bigg(\frac{\tau^2}{L^2}\bigg)
=\frac {\alpha \tau} {\log v} +O\bigg(\frac{\tau+\tau^2}{L^2}\bigg) .\qedhere
\] 
\end{proof}


\section{Estimates for the cumulant generating function of $H_{\Y}(Q)$}

Throughout this section we assume that the independent random variables $\{\Yn\}_{q_n\leq Q}$   satisfy CRVH$(\Y)$.
For every complex number $s$, we denote the cumulant generating function for  the sum  $H_{\Y}(Q)$, by 
\[
K_Q(s)= K_Q(\Y, s):= \log \ex\left(e^{sH_{\Y}(Q)}\right)= \log \mathbb \ex\left(e^{s\sum_{q_n\leq Q}  \Yn/q_n}\right).
\]
By the independence of the $\Yn$ we have
\begin{equation}\label{Cumu}
K_Q(s)=\sum_{q_n\leq Q} \log \ex\left(e^{s\Yn/q_n}\right)= s H(s) + \sum_{q_n\leq Q}    \fn\left(\frac{s}{q_n}\right),
\end{equation}
extending the definition of the $\fn$   in \eqref{Thefunctionf}  to  complex $s$,  the two cases corresponding to $|s|<1$ and $|s|\geq 1$.
In this section we will show that $\lim_{Q\to\infty} K_Q(s)$ exists for all $s\in \mathbb{C}$, and we will estimate $K_Q(s)$ in a large range of $s$ in terms of $Q$. These will be important ingredients  in the proofs of Theorems \ref{cor: Main2*} and \ref{LimitLaw}.  

We will need to prove the following estimates for the function $\fn$. 

\begin{lemma}\label{LogEx}
For each $n$ we have uniformly
\begin{equation}\label{LogEx1}
\fn(t)\ll \begin{cases} t^2 & \text{ if }  0\leq t<1,\\ \displaystyle{\frac{t}{\log^2(2t)}}  & \text{ if } t\geq 1,\end{cases}
\end{equation}
(which implies that the integral defining $\eta_{\Y}:=\int_0^{\infty} \frac{f_{\Y}(u)}{u^2}du$ converges) and 
\begin{equation}\label{LogEx2}
\fn'(t)\ll \begin{cases} t & \text{ if } 0<t<1,\\ \displaystyle{\frac{1}{\log^2(2t)}} & \text{ if } t> 1.\end{cases}
\end{equation}
\end{lemma}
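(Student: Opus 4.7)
The plan is to establish the bounds on $\fn$ first on $[0,1)$ and then on $[1,\infty)$, and to deduce the derivative estimates from convexity of the cumulant generating function $\Lambda_n(t):=\log\ex(e^{t\Yn})$.

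For $0\le t<1$ I would use $\ex(\Yn)=0$ and $|\Yn|\le 1$: a Taylor expansion yields $\ex(e^{t\Yn})=1+\sum_{k\ge 2}t^k\ex(\Yn^k)/k!=1+O(t^2)$, whence $\fn(t)=\log(1+O(t^2))\ll t^2$. For the derivative, $\Lambda_n''(s)$ is the variance of $\Yn$ under the tilt $e^{s\Yn}d\pr/\ex(e^{s\Yn})$, and is bounded by $1$ since $|\Yn|\le 1$; together with $\fn'(0)=\ex(\Yn)=0$, integration gives $|\fn'(t)|\le t$.

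For $t\ge 1$ the upper bound $\fn(t)\le 0$ follows at once from $\Yn\le 1$. The main work is the matching lower bound, where I would invoke the tail hypothesis \eqref{LowerBoundf}: for any $u\ge 1$,
\[
\ex(e^{t\Yn}) \ge e^{t(1-1/u)}\pr(\Yn>1-1/u) \gg \exp\bigl(t-t/u-ce^{\sqrt u}\bigr),
\]
so $\fn(t)\ge -t/u-ce^{\sqrt u}-O(1)$. This is the step I expect to be the main obstacle, since one must choose $u$ to balance the two competing penalties $t/u$ and $ce^{\sqrt u}$. Taking $u=\tfrac14\log^2 t$ for $t$ larger than an absolute constant $T_0$ (the bounded range $1\le t\le T_0$ being trivial since $|\fn(t)|\le 2t=O(1)$ there) yields $t/u+ce^{\sqrt u}=4t/\log^2 t+c\sqrt t\ll t/\log^2(2t)$, as required. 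The convergence of $\eta_{\Y}$ then follows by applying the same bounds to $\Y$ itself, since $\int_0^1 \f(u)u^{-2}\,du=O(1)$ and $\int_1^\infty \f(u)u^{-2}\,du\ll \int_1^\infty (u\log^2 u)^{-1}\,du<\infty$.

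For the derivative on $t>1$, I would exploit convexity: $\fn=\Lambda_n-t$ is convex on $[1,\infty)$ and $\fn'(t)=\ex_t(\Yn)-1\le 0$, where $\ex_t$ denotes the tilted expectation. Applying the standard inequality $f'(y)\ge (f(y)-f(x))/(y-x)$ for convex $f$ with $x=t/2$, $y=t$ (legal when $t\ge 2$) produces
\[
|\fn'(t)|=-\fn'(t)\le \frac{\fn(t/2)-\fn(t)}{t/2} \ll \frac{|\fn(t/2)|+|\fn(t)|}{t} \ll \frac{1}{\log^2(2t)},
\]
the last step using the function bound proved above. For $1<t<2$ the crude estimate $|\fn'(t)|\le 2$ suffices since $1/\log^2(2t)=O(1)$ on this bounded range. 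The jump of $\fn$ at $t=1$ is harmless because the convexity argument is run independently on each of the two pieces $[0,1)$ and $[1,\infty)$.
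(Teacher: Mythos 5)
Your argument is correct. For the bound \eqref{LogEx1} you follow essentially the same route as the paper: the Taylor expansion using $\ex(\Yn)=0$ and $|\Yn|\leq 1$ when $0\leq t<1$, and for $t\geq 1$ the trivial upper bound $\ex(e^{t\Yn})\leq e^t$ played against the lower bound $\ex(e^{t\Yn})\geq e^{t(1-1/N)}\pr(\Yn>1-1/N)$ from \eqref{LowerBoundf}, with $N\asymp \log^2 t$ balancing the two penalties (the paper takes $N=(\tfrac 12\log(2t/c^2))^2$, you take $u=\tfrac 14\log^2 t$; same choice). Where you genuinely diverge is in \eqref{LogEx2}. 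The paper works directly from the formula $\fn'(t)=\ex(\Yn e^{t\Yn})/\ex(e^{t\Yn})-\mathbf{1}_{t>1}$: for $0<t<1$ by Taylor-expanding numerator and denominator separately, and for $t>1$ by splitting $\ex(\Yn e^{t\Yn})$ over the event $\{\Yn>1-1/N\}$ and its complement, i.e.\ by rerunning the tail argument a second time at the level of the first moment. You instead use convexity of the cumulant generating function: the variance interpretation of $\Lambda_n''$ gives $|\fn'(t)|\leq t$ on $(0,1)$, and the supporting-line inequality on $[t/2,t]$ converts the already-established bound $|\fn|\ll t/\log^2(2t)$ into $|\fn'(t)|\ll 1/\log^2(2t)$ for $t\geq 2$, the range $1<t<2$ being trivial. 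Your route is slightly slicker in that \eqref{LowerBoundf} is invoked only once and the derivative bound is bootstrapped from the function bound; the paper's is more hands-on but needs no appeal to convexity. One small slip: in justifying the convergence of $\eta_{\Y}$ you wrote $\int_1^\infty (u\log^2 u)^{-1}\,du$, which diverges at $u=1$; the integrand your own bound produces is $(u\log^2(2u))^{-1}$, which is integrable, so this is only a typo.
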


\begin{proof}[Proof of Lemma \ref{LogEx}]
We start by proving \eqref{LogEx1}. Let $s$ be a complex number such that $|s|< 1$. Using the Taylor expansion of the exponential we have 
\[
\ex(e^{s\Yn})= \ex(1+s\Yn+ O(|s|^2\Yn^2))= 1+ O(|s|^2)
\]
since $\ex(\Yn)= 0$ and $|\Yn|\leq 1$. This implies
\begin{equation}\label{LogEx0}
\log \ex(e^{s\Yn}) \ll |s|^2,
\end{equation} 
which gives the desired estimate for $\fn(t)$ when $0\leq t< 1$.

We now suppose that $t\geq 1$. Let $N>1$ be a parameter to be chosen. Then we have
$$ \ex(e^{t\Yn}) \geq \pr(\Yn>1-1/N) e^{t(1-1/N)} \gg \exp \left(t(1-1/N)-ce^{\sqrt{N}}\right),$$
by \eqref{LowerBoundf}.
Choosing $N=(\tfrac 12\log(2t/c^2))^2$ and using that $\Yn\leq 1$ we obtain
\begin{equation}\label{MomentGenerating}
\exp \left(t-\frac{5t}{\log^2 (2t/c^2)}\right) \ll \ex(e^{t\Yn}) \leq e^{t},
\end{equation}
from which the desired estimate for $\fn(t)$ follows in this case. 

Next, we establish \eqref{LogEx2}. Note that $\fn$ is differentiable on $(0, \infty)\setminus\{1\}$ and we have 
\begin{equation}\label{Derivf}
\fn'(t)=\begin{cases} \displaystyle{\frac{\ex (\Yn e^{t\Yn})}{\ex (e^{t\Yn})}}& \text{ if } 0<t<1,\\ \displaystyle{\frac{\ex (\Yn e^{t\Yn})}{\ex (e^{t\Yn})}}- 1 & \text{ if } t> 1.\end{cases}
\end{equation}
As before, in the case $0<t<1$ the estimate of $\fn'(t)$ follows from the Taylor expansions $\ex(e^{t\Yn})=1+O(t^2)$ and 
\[
\ex(\Yn e^{t\Yn})= \ex(\Yn+t\Yn^2+ O(t^2|\Yn^3|))= t\ex(\Yn^2)+ O(t^2).
\] 

We now suppose that $t>1$, and let $N>1$ be a parameter to be chosen. Let $\mathcal{B}$ be the event $\Yn>1-1/N$, and $ \mathcal{B}^{c}$ be its complement. Then we have 
$$ \ex (\Yn e^{t\Yn})= \ex(\mathbf{1}_{\mathcal{B}} \cdot \Yn e^{t\Yn}) + \ex(\mathbf{1}_{\mathcal{B}^c} \cdot \Yn e^{t\Yn})\geq (1-1/N) \ex(\mathbf{1}_{\mathcal{B}} \cdot  e^{t\Yn})+ O(e^{t(1-1/N)}) ,$$
 since $\ex(\mathbf{1}_{\mathcal{B}^c} \cdot e^{t\Yn})\leq e^{t(1-1/N)}$ . Moreover this then implies that
\begin{equation}\label{MomentGen2} \ex (\Yn e^{t\Yn}) \geq (1-1/N) \ex( e^{t\Yn})+ O(e^{t(1-1/N)}).
\end{equation}
We choose $N= (\log 2t/c^2)^2/6$. Then, it follows from \eqref{MomentGenerating} that 
$$ e^{t(1-1/N)} \ll \frac{\ex(e^{t\Yn})}{t}.$$
Inserting this estimate in \eqref{MomentGen2}, and using the bound $\Yn\leq 1$ gives 
$$ 1- \frac{1}{N}+O\left(\frac{1}{t}\right) \leq \frac{\ex (\Yn e^{t\Yn})}{\ex (e^{t\Yn})} \leq 1.$$
This implies the desired estimate for $\fn'$ in this case, as desired. 
\end{proof}

\begin{corollary}\label{Cor.cumulant}
For each complex number $s$ the limit $\lim_{Q\to \infty} K_Q(s)$ exists, and equals $K(s)$, say. Moreover $ K_Q(s)=K(s)+O( |s|^2  \tfrac{(\log Q)^{A-1}}Q)$ if $Q>|s|$.
\end{corollary}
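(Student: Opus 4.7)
The plan is to prove both assertions at once by showing that $\{K_Q(s)\}_{Q>|s|}$ is a Cauchy sequence with an explicit rate. The two inputs are the decomposition \eqref{Cumu} of $K_Q(s)$ into the finite ``diagonal'' piece plus $\sum_{q_n\leq Q}f_n(s/q_n)$, and the quadratic estimate $|f_n(\sigma)|\ll |\sigma|^2$ for complex $\sigma$ with $|\sigma|<1$. This last bound is exactly what display \eqref{LogEx0} inside the proof of Lemma \ref{LogEx} already establishes from $\ex(\Y(n))=0$ and $|\Y(n)|\leq 1$ via the Taylor expansion $\ex(e^{\sigma\Y(n)})=1+O(|\sigma|^2)$; it holds for complex arguments, not merely real ones.

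The first step is to fix $s$ and take $Q'>Q>|s|$. Then every $q_n>Q$ satisfies $|s/q_n|<1$, so the bound above gives $|f_n(s/q_n)|\ll |s|^2/q_n^2$ termwise, and from \eqref{Cumu} one obtains
\[
|K_{Q'}(s)-K_Q(s)| \;=\; \Big|\sum_{Q<q_n\leq Q'} f_n\!\left(\tfrac{s}{q_n}\right)\Big| \;\ll\; |s|^2 \sum_{q_n>Q}\frac{1}{q_n^2}.
\]

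The next step is to control the tail $\sum_{q_n>Q}1/q_n^2$ by partial summation against the counting function $\mathcal A$. Writing
\[
\sum_{q_n>Q}\frac{1}{q_n^2} \;=\; -\,\frac{\mathcal A(Q)}{Q^2}+2\int_Q^{\infty}\frac{\mathcal A(t)}{t^3}\,dt,
\]
and invoking the hypothesis \eqref{A-distn}, which implies $\mathcal A(t)\ll t(\log t)^{A-1}$ when $A\geq 1$ and $\mathcal A(t)\ll t/\log t$ when $A=0$, both the boundary term and the integral are seen to be $O((\log Q)^{A-1}/Q)$ uniformly (the $A=0$ case fits the same expression $(\log Q)^{A-1}/Q=1/(Q\log Q)$). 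Combining, for $Q'>Q>|s|$,
\[
|K_{Q'}(s)-K_Q(s)|\;\ll\; |s|^2\,\frac{(\log Q)^{A-1}}{Q}.
\]

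This Cauchy estimate shows that $K(s):=\lim_{Q\to\infty}K_Q(s)$ exists for every $s\in\mathbb C$, and letting $Q'\to\infty$ in the same inequality yields the stated error bound $K_Q(s)=K(s)+O(|s|^2(\log Q)^{A-1}/Q)$ for $Q>|s|$. I do not expect any real obstacle: the argument is a direct consequence of the quadratic behaviour of $f_n$ at the origin (already checked in the proof of Lemma \ref{LogEx}) combined with the routine partial-summation tail bound coming from the growth rate of $\mathcal A$. The only minor point to mention is that the ``$sH(s)$'' piece appearing in \eqref{Cumu} is a finite, $Q$-independent contribution once $Q>|s|$, so it plays no role in the Cauchy argument.
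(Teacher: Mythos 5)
Your argument is correct and is essentially identical to the paper's proof: both bound $K_{Q'}(s)-K_Q(s)$ termwise by $|s|^2/q_n^2$ using the quadratic estimate \eqref{LogEx0} (valid since $|s|/q_n<1$ for $q_n>Q>|s|$), then control the tail $\sum_{q_n>Q}q_n^{-2}\ll|s|^{-2}\cdot|s|^2(\log Q)^{A-1}/Q$ via the growth of $\mathcal A$, and conclude by the Cauchy criterion. The only cosmetic difference is that you spell out the partial summation and the $Q$-independence of the $sH(s)$ term, which the paper leaves implicit.
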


\begin{proof}
If $|s|<Q<q_n\leq Z$ then $|s|/q_n< 1$ and so 
\[
\log \ex\left(e^{s\Yn/q_n}\right)  \ll   \left(\frac{|s|}{q_n}\right)^2.
\]
by \eqref{LogEx0}. Therefore,  by \eqref{Cumu},
\begin{align*} 
K_Z(s)-K_Q(s) = \sum_{Q<q_n\leq Z}    \log \ex\left(e^{s\Yn/q_n}\right)
\ll \sum_{Q<q_n\leq Z}    \left(\frac{|s|}{q_n}\right)^2   \ll  |s|^2\frac{ (\log Q)^{A-1}}Q
\end{align*}
  by the growth rate of $\mathcal A(x)$. This is $o_{Q\to \infty}(1)$ and so the 
$K_Q(s)$ form a Cauchy sequence, and therefore have a limit. This same calculation then implies the claimed estimate.
\end{proof}


Next we wish to estimate the cumulant generating function $K_Q(s)$ in a large range of $s$ in terms of $Q$. 
\begin{proposition}\label{Cumulant}
For any real number $s$ in the range $3\leq s\leq  Q/\log Q$ we have
$$K_Q(s)= sH(s)+ \left(\alpha \eta_{\Y} +O\left(\frac{\log\log s}{\log s}\right)\right)\,   s  (\log s)^{A-1} .$$
\end{proposition}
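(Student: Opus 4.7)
By the identity \eqref{Cumu}, proving the proposition amounts to showing
\[
\sum_{q_n \leq Q} \fn(s/q_n) = \left(\alpha \eta_{\Y} + O\left(\frac{\log\log s}{\log s}\right)\right) s (\log s)^{A-1}.
\]
My plan is to (i) replace each $\fn$ by the limiting $\f$ at acceptable cost using CRVH$(\Y)$, and then (ii) evaluate $\sum_{q_n \leq Q} \f(s/q_n)$ by partial summation against \eqref{A-distn}, the substitution $u = s/t$, and the identity $\eta_{\Y} = \int_0^\infty \f(u)/u^2 du$ (convergent by Lemma \ref{LogEx}).

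For (i), since $\Y(n) = \Y_n + O(1/(\log n)^2)$ with $\Y_n$ distributed as $\Y$, the mean value theorem applied to $e^{t\Y}$ together with the lower bound \eqref{MomentGenerating} for $\ex(e^{t\Yn})$ yields the pointwise estimate $|\fn(t) - \f(t)| \ll t/(\log n)^2$ in the regime where the ratio $\ex(e^{t\Y(n)})/\ex(e^{t\Y_n})$ is close to $1$. Combining this with the uniform bound $|\fn(t)|,|\f(t)| \ll t/\log^2(2t)$ for $t \geq 1$ from Lemma \ref{LogEx}, splitting the sum at $q_n = s$, and observing that $\log q_n$ and $\log n$ are comparable by \eqref{A-distn}, absorbs $\sum_{q_n \leq Q} |\fn(s/q_n) - \f(s/q_n)|$ into the target error.

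For (ii), write $\mathcal{A}(t) = \alpha \int_{\beta'}^t (\log u/\beta)^{A-1} du + E(t)$ with $E(t) \ll t/(\log t)^{B-A+1}$; partial summation together with the derivative bound \eqref{LogEx2} handles the $E$-contribution with an error $O(s(\log s)^{A-B})$, negligible since $B > A$. After substituting $u = s/t$, the main integral becomes
\[
\alpha s \int_{s/Q}^{s/\beta'} \frac{\f(u)}{u^2} (\log s - \log(u\beta))^{A-1} du.
\]
I would decompose this as $\alpha s (\log s)^{A-1}\int_{s/Q}^{s/\beta'}\f(u)/u^2\,du$ plus the correction $\alpha s\int \f(u)/u^2\bigl[(\log s - \log(u\beta))^{A-1} - (\log s)^{A-1}\bigr]du$. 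The first piece contributes $\alpha s (\log s)^{A-1}(\eta_{\Y} + O(1/\log s))$, the error coming from extending the integral to $(0,\infty)$: the upper tail $u > s/\beta'$ costs $O(1/\log s)$ via $\f(u)/u^2 \ll 1/(u\log^2 u)$, and the lower tail $u < s/Q$ costs $O(s/Q) \ll 1/\log s$ via $\f(u)/u^2 \ll 1$. For the correction, a Taylor expansion valid when $|\log(u\beta)| \leq (\log s)/2$ bounds the bracketed factor by $(\log s)^{A-2}|\log(u\beta)|$, whose integral against $|\f(u)|/u^2$ is $\ll (\log s)^{A-2}\log\log s$ — the $\log\log s$ arising from $\int_1^{s/\beta'} (\log u)/(u\log^2 u)\,du$; the complementary range $|\log(u\beta)| > (\log s)/2$ is handled by crude bounds using Lemma \ref{LogEx} and contributes lower-order terms.

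The main obstacle is the bookkeeping in the last step: isolating the leading $(\log s)^{A-1}$ factor uniformly in $A$ with exactly the stated $O(\log\log s/\log s)$ relative error, without losing control through over-crude estimates in the extreme ranges of $u$. A subtler issue in step (i) is that the naive pointwise bound $|\fn(t) - \f(t)| \ll t/(\log n)^2$ degrades once $\ex(e^{t\Yn})$ becomes exponentially large in $t$; there the two-sided estimate \eqref{MomentGenerating} is essential to keep the ratio $\ex(e^{t\Y(n)})/\ex(e^{t\Y_n})$ close to $1$ and prevent the overall CRVH error from swamping the target.
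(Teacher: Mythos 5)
Your proposal is correct in substance and follows essentially the same route as the paper: reduce via \eqref{Cumu}, swap $\fn$ for $\f$ using the third clause of CRVH$(\Y)$, then partial summation against \eqref{A-distn} and the substitution $u=s/t$ to extract $\eta_{\Y}=\int_0^\infty \f(u)u^{-2}\,du$. The organizational difference is that the paper truncates the sum to $s^{1/2}<q_n<s\log s$ \emph{before} doing anything else, discarding both tails directly with Lemma \ref{LogEx} (the lower range costs $\ll \tfrac{s}{(\log s)^2}H(s^{1/2})\ll s(\log s)^{A-2}\log\log s$, the upper range $s^2\sum_{q_n\geq s\log s}q_n^{-2}\ll s(\log s)^{A-2}$), whereas you carry the full range $\beta'\leq q_n\leq Q$ through the partial summation and trim the tails inside the $u$-integral. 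Both can be made to work, but the paper's order of operations makes two of your delicate points disappear at once: on the truncated range $\log n\asymp\log s$, so the swap error is simply $\ll \tfrac{s}{(\log s)^{2}}H(s\log s)\ll s(\log s)^{A-2}\log\log s$; and $u$ stays below $s^{1/2}$, so the Taylor expansion of $(\log(s/u\beta))^{A-1}$ is valid on the whole remaining range and your ``complementary range'' never arises (note that for $A<1$ that range is safe only if you retain the weight $(\log(s/u\beta))^{A-1}$ inside the crude bound; replacing it by $O(1)$ gives $s/\log s$, which is too large when $A=0$).

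Two specific repairs. First, step (i) as written is fragile for $q_n>s$: the small-$t$ bound $|\fn(t)-\f(t)|\ll t^2$ summed over all $q_n>s$ gives $s^2\sum_{q_n>s}q_n^{-2}\asymp s(\log s)^{A-1}$, i.e.\ the size of the main term, while the difference bound $t/(\log n)^2$ alone gives $s\sum_{q_n>s}q_n^{-1}(\log q_n)^{-2}$, which for $A\geq 2$ is of order $s(\log Q)^{A-2}$ (or $s\log\log Q$ when $A=2$) and is not controlled in terms of $s$. You must interpolate, e.g.\ use the difference bound for $s<q_n<s\log s$ and the $t^2$ bound beyond --- exactly the paper's cutoff. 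Second, your concern about needing \eqref{MomentGenerating} for the swap is a red herring: since $\Yn=\Y_n+O(1/(\log n)^2)$ is a deterministic bound on the difference of the random variables, one has $e^{t\Yn}=e^{t\Y_n}e^{O(t/(\log n)^2)}$ pointwise on the sample space, hence $\ex(e^{t\Yn})=\ex(e^{t\Y})e^{O(t/(\log n)^2)}$ and $\fn(t)=\f(t)+O(t/(\log n)^2)$ uniformly in $t\geq 0$, with no mean value theorem and no lower bound on the moment generating function; the estimate does not degrade for large $t$.
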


\begin{proof}[Proof of Proposition \ref{Cumulant}]
By \eqref{Cumu} it suffices to show that for $3\leq s\leq Q/\log Q$ we have
$$ \sum_{q_n\leq Q}    \fn\left(\frac{s}{q_n}\right)= \left(\alpha \eta_{\Y} +O\left(\frac{\log\log s}{\log s}\right)\right)\,   s  (\log s)^{A-1}. 
$$
To estimate this sum, we shall split it in three parts: $q_n\leq s^{1/2}$, $s^{1/2}<q_n<s\log s$ and $s\log s \leq q_n\leq Q$. By \eqref{LogEx1} and Lemma \ref{LogSum} the contribution of the first part is 
$$ \sum_{q_n\leq s^{1/2}} \fn\left(\frac{s}{q_n}\right)\ll \frac{s}{\log^2 s}\sum_{q_n\leq s^{1/2}} \frac{1}{q_n}\ll s (\log s)^{A-2}\log\log s.$$

Moreover, using \eqref{LogEx1}, we deduce that the contribution of the last part is 
\begin{align*}
\sum_{s\log s \leq q_n\leq Q} \fn\left(\frac{s}{q_n}\right) &\ll s^2\sum_{s\log s \leq q_n\leq Q}\frac{1}{q_n^2}= s^2\int_{s\log s}^{Q} \frac{d\mathcal A(t)}{t^2} \ll s(\log s)^{A-2},
\end{align*}
by the growth rate of $\mathcal{A}(x)$. Hence, we deduce that 
\begin{equation}\label{TruncSum}
\sum_{q_n\leq Q} \fn\left(\frac{s}{q_n}\right)=\sum_{s^{1/2}<q_n<s \log s} \fn\left(\frac{s}{q_n}\right) +O(s (\log s)^{A-2}\log\log s).
\end{equation}

By the third hypothesis of  CRVH$(\Y)$ we see that for all $t$ 
\[
\ex (e^{t\Yn}) =\ex (e^{t\Y_n+O(\tfrac t{(\log n)^{2}})} )=\ex (e^{t\Y}) e^{O(\tfrac t{(\log n)^{2}})}
\]
so that $f_{\Yn}(t)=\f(t)+O(\tfrac t{(\log n)^{2}})$. Therefore
\[
\sum_{s^{1/2}<q_n<s \log s} \fn\left(\frac{s}{q_n}\right) = \sum_{s^{1/2}<q_n<s \log s} f_{\Y}\left(\frac{s}{q_n}\right)
+O\bigg(  \sum_{s^{1/2}<q_n<s \log s}  \frac s{q_n(\log n)^{2}} \bigg) 
\]
For this last sum we note that if $s^{1/2}<q_n<s \log s$ then $(\log n)^{2} \asymp (\log s)^{2}$   and so
\[
 \sum_{s^{1/2}<q_n<s \log s}  \frac s{q_n(\log n)^2} \ll \frac s{(\log s)^2} H(s \log s)\ll   s(\log s)^{A-2} \log\log s
\]
by Lemma \ref{LogSum} and so
\begin{equation}\label{TruncSum2}
\sum_{q_n\leq Q} \fn\left(\frac{s}{q_n}\right)=\sum_{s^{1/2}<q_n<s \log s}  f_{\Y}\left(\frac{s}{q_n}\right) +O(s (\log s)^{A-2}\log\log s).
\end{equation}

Now, we have, for $E(x)=\mathcal A(x)- \alpha \int_2^x (\log (t/\beta))^{A-1}  dt\ll x (\log x)^{A-1-B}$,
\begin{equation}\label{MPart}
\sum_{s^{1/2}<q_n<s \log s} f_{\Y}\left(\frac{s}{q_n}\right)=\int_{s^{1/2}}^{s\log s} f_{\Y}(s/t)d\mathcal A(t)= \alpha \int_{s^{1/2}}^{s\log s} f_{\Y}(s/t) \log^{A-1}(t/\beta)dt +\E_1,
\end{equation}
where 
\begin{align*}
 \E_1& = \int_{s^{1/2}}^{s\log s} f_{\Y}(s/t) dE(t) =[E(t)f_{\Y}(s/t) ]_{s^{1/2}}^{s\log s} + s\int_{s^{1/2}}^{s\log s} f_{\Y}'(s/t) E(t) \frac{dt}{t^2} \\
 &\ll  f_{\Y}((\log s)^{-1}) s(\log s)^{A-B} + f_{\Y}(s^{1/2}) s^{1/2} (\log s)^{A-1-B}+ \int_{s^{1/2}}^{s\log s} \frac{s}{t^2}|f_{\Y}'(s/t)| t (\log t)^{A-1-B}dt\\
 & \ll s (\log s )^{A-B-2}+  s (\log s)^{A-1-B} \int_{s^{1/2}}^{s\log s}\frac{|f_{\Y}'(s/t)|}{t} dt. 
 \end{align*}
by \eqref{LogEx1}. To bound this last integral, we use \eqref{LogEx2}. This gives
$$ \int_{s^{1/2}}^{s\log s}\frac{|f_{\Y}'(s/t)|}{t} dt \ll \int_{s^{1/2}}^s \frac{1}{t\log^2(2s/t)}dt+ \int_{s}^{s\log s} \frac{s}{t^2} dt \ll \int_2^{2s^{1/2}} \frac{1}{v\log^2 v} dv+ 1\ll 1,$$
by making the change of variable $v=2s/t$.
Inserting these estimates in \eqref{MPart} gives 
\begin{equation}\label{MPart2}
\sum_{s^{1/2}<q_n<s\log s} f_{\Y}\left(\frac{s}{q_n}\right)= \alpha \int_{s^{1/2}}^{s\log s} f_{\Y}(s/t) \log^{A-1}(t/\beta)dt+ O\left(s (\log s)^{A-1-B}\right).
\end{equation}
Using the change of variables $u=s/t$ we obtain
\begin{align*}
\int_{s^{1/2}}^{s\log s} f_{\Y}(s/t) \log^{A-1}(t/\beta)dt 
&= s\int_{(\log  s)^{-1}}^{s^{1/2}} \frac{f_{\Y}(u)}{u^2} \log^{A-1}(s/u\beta)du\\
&= s\log^{A-1}(s/\beta)\int_{(\log  s)^{-1}}^{s^{1/2}} \frac{f_{\Y}(u)}{u^2} \left(1+O\left(\frac{\log u}{\log s}\right)\right)du.
\end{align*}
Moreover, using \eqref{LogEx1} we get
$$ \int_{0}^{(\log  s)^{-1}} \frac{f_{\Y}(u)}{u^2}du+ \int_{s^{1/2}}^{\infty} \frac{f_{\Y}(u)}{u^2}du\ll \int_{0}^{(\log  s)^{-1}} 1du+ \int_{s^{1/2}}^{\infty} \frac{1}{u(\log u)^2}du \ll \frac{1}{\log s}, 
$$
and 
$$ 
\int_{(\log  s)^{-1}}^{s^{1/2}} \frac{f_{\Y}(u)\log u}{u^2}du \ll \int_{(\log s)^{-1}}^{1} |\log u|du+\int_{1}^{s^{1/2}} \frac{1}{u\log u}du \ll \log\log s.
$$ 
This gives 
$$ \int_{s^{1/2}}^{s\log s} f_{\Y}(s/t) \log^{A-1}(t/\beta)dt =\eta_{\Y} s(\log s)^{A-1} \left(1+O\left(\frac{\log\log s}{\log s}\right)\right). 
$$
Combining this estimate with \eqref{TruncSum2} and \eqref{MPart2} completes the proof.
\end{proof}

\section{The main Theorem for the sum of independent random variables : Proof of Theorem \ref{cor: Main2*}}

The key technical result in this paper is  the following result:

\begin{thm}\label{Main*}
Let $Q$ be large and $\{\Yn\}_{q_n\leq Q}$ be a sequence of independent random variables which satisfy CRVH$(\Y)$. 
Given $V$ in the range 
\begin{equation}\label{DefRLQ1}
c \leq V\leq R:= \begin{cases} \frac{\alpha}{A} (\log Q)^{A} +C_{\mathcal A}- \big( \alpha  \log\log Q +\Theta\big) (\log Q)^{A-1}  & \text { if } A>0, \\
  \alpha \log\log Q +C_{\mathcal A} -(\alpha \log \log Q +\Theta)/\log Q & \text { if } A=0, \\
 \end{cases}
 \end{equation}
where $\Theta>0$ and $c>1$ are suitably large constants, select $Z=Z(V)$ to be the smallest number $z$ for which $H(z)\geq  V$.
Then we have 
$$ \Phi_Q(\Y, V)= \exp\left(-\alpha e^{-\eta_{\Y}-1} Z (\log Z)^{A-1} \left(1+O\left(\sqrt{\E(Z)}\right)\right)\right).$$
where $\E(Z)$ is defined by\footnote{The constant $c$ above is chosen to be large enough that $Z$ is large enough so that $\E(Z)$ is well-defined.}
 \begin{equation}\label{EpsilonS}
\E(Z):= \frac{\log\log Z}{\log Z}+\frac{1}{(\log Z)^{B-1}}.
\end{equation}
\end{thm}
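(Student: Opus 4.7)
I would prove the theorem by the saddle-point method, using Proposition~\ref{Cumulant} to control the cumulant generating function $K_Q(s) = \log\ex(e^{sH_\Y(Q)})$. Setting $K_Q'(s)=V$ heuristically and differentiating Proposition~\ref{Cumulant} gives $K_Q'(s) \sim H(s) + \alpha(1+\eta_\Y)(\log s)^{A-1}$; together with $H(Z) = V$ and Lemma~\ref{DiffL} (with $\tau := \log Z - \log s$) this forces $\tau \sim 1+\eta_\Y$. I therefore take the saddle
\[
s_* := Z\, e^{-1-\eta_\Y}.
\]
The hypothesis $V \leq R$ of \eqref{DefRLQ1} is equivalent (via $H(Z) = V$ and Lemma~\ref{LogSum}) to $\log Z \leq \log Q - \log\log Q - \Theta'$, so $s_* \leq Q/\log Q$ and Proposition~\ref{Cumulant} applies at $s_*$.

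\textbf{Upper bound from Chernoff.} Substituting $s = s_*$ into Proposition~\ref{Cumulant} and using Lemma~\ref{DiffL} to expand
\[
H(s_*)-V \;=\; -\alpha(1+\eta_\Y)(\log s_*)^{A-1} + O\!\bigl((\log s_*)^{A-2} + (\log s_*)^{A-B}\bigr),
\]
the contribution $-\alpha(1+\eta_\Y) s_*(\log s_*)^{A-1}$ from $s_*(H(s_*)-V)$ and the term $+\alpha\eta_\Y s_*(\log s_*)^{A-1}$ from the $K_Q$ expansion partially cancel, leaving
\[
K_Q(s_*) - s_* V \;=\; -\alpha\, e^{-1-\eta_\Y}\, Z(\log Z)^{A-1}\bigl(1 + O(\sqrt{\E(Z)})\bigr).
\]
The Chernoff inequality $\Phi_Q(\Y,V) \leq e^{-s_* V}\ex(e^{s_* H_\Y(Q)}) = e^{K_Q(s_*) - s_* V}$ then yields the upper half of the theorem.

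\textbf{Lower bound via exponential tilt.} For the matching lower bound I would introduce the tilted law $\pr^{(s_*)}$ with density $e^{s_* H_\Y(Q) - K_Q(s_*)}$ against $\pr$, and restrict the identity $\Phi_Q(\Y,V) = e^{K_Q(s_*) - s_*V}\ex^{(s_*)}(e^{-s_*(H_\Y(Q)-V)}\mathbf{1}_{H_\Y(Q)>V})$ to the interval of length $1/s_*$ where $e^{-s_*(H_\Y(Q)-V)}\geq e^{-1}$, obtaining
\[
\Phi_Q(\Y,V) \;\geq\; e^{-1}\, e^{K_Q(s_*)-s_* V}\, \pr^{(s_*)}\!\bigl(V < H_\Y(Q) \leq V + 1/s_*\bigr).
\]
Under $\pr^{(s_*)}$ the mean equals $K_Q'(s_*)$ (which my choice of $s_*$ makes $V$ up to lower-order terms) and the variance equals $K_Q''(s_*)$, so I would bound the tilted probability from below by Fourier inverting $\psi(t) := \ex^{(s_*)}(e^{it(H_\Y(Q)-V)}) = \exp(K_Q(s_*+it) - K_Q(s_*) - itV)$; Taylor expansion near $t=0$ produces a Gaussian of variance $K_Q''(s_*)$ and places the required mass in the window.

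\textbf{Main obstacle.} The delicate step is controlling $|\psi(t)|$ for $|t|$ bounded away from $0$, where the product $\prod_n |\ex^{(s_*)}(e^{it\Y(n)/q_n})|$ must decay for the local Gaussian approximation to dominate. Since each $\Y(n)$ may be discrete this decay is not automatic; it has to be extracted from the third bullet of CRVH$(\Y)$, which provides i.i.d.\ ``universal'' copies $\Y_n$ with $\Y(n) = \Y_n + O(1/(\log n)^{2})$ and thus genuine independence of the phases across large ranges of $n$. Any polynomial prefactor (such as $s_*\sigma_*$) surviving from the inversion contributes only $O(\log Z)$ in the exponent, which is absorbed into the $\sqrt{\E(Z)}$ error of the theorem since $Z(\log Z)^{A-1}\sqrt{\E(Z)} \gg \log Z$ throughout the stated range of $V$.
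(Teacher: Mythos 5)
Your choice of saddle point $s_*=e^{-1-\eta_{\Y}}Z$ and your Chernoff upper bound are exactly right and match the paper (indeed the Chernoff step even gives error $O(\E(Z))$ rather than $O(\sqrt{\E(Z)})$). The problem is the lower bound, where your proposal diverges from the paper and is left with a genuine gap at precisely the point you flag as the ``main obstacle.'' A local limit theorem for the tilted measure in a window of width $1/s_*$ requires controlling $|\psi(t)|$ for $|t|$ up to about $s_*$, far beyond $1/\sigma_*$ where $\sigma_*^2=K_Q''(s_*)\asymp (\log s_*)^{A-1}/s_*$; and CRVH$(\Y)$ explicitly allows discrete laws (e.g.\ $\Y=\pm1$ with probability $\tfrac12$, the quadratic-character model), for which the tilted characteristic function has no uniform decay. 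The decay would have to come from incommensurability of the weights $1/q_n$, about which the hypotheses say nothing beyond the counting function \eqref{A-distn}; the third bullet of CRVH does not supply it, since the universal copies $\Y_n$ may themselves be lattice-valued. A second, quieter gap: you assert that the tilted mean $K_Q'(s_*)$ equals $V$ up to lower order, but Proposition \ref{Cumulant} estimates $K_Q(s)$ with a multiplicative error in its second term, and one cannot differentiate an asymptotic formula; an estimate for $K_Q'$ would need a separate argument.

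The paper sidesteps both issues. It never tilts and never touches $K_Q'$ or a characteristic function: starting from $e^{K_Q(u)}=u\int_{-\infty}^{\infty}e^{ut}\Phi_Q(t)\,dt$, it compares the integral at $u=s$ with the same identity at $u=e^{\pm\delta}s$ (using only the values of $K_Q$, via Proposition \ref{Cumulant} and Lemma \ref{DiffL}) to show the integral localizes to $|t-V|\leq\Delta$ with $\Delta=\delta\alpha(\log s)^{A-1}$ and $\delta\asymp\sqrt{\E(s)}$; then the monotonicity of $\Phi_Q$ sandwiches the localized integral between $e^{sV+O(s\Delta)}\Phi_Q(V\pm\Delta)$, and a final perturbation $V\mapsto V\pm\Delta$ (computing $s^{\pm}$ and showing $s^{\pm}(\log s^{\pm})^{A-1}=s(\log s)^{A-1}(1+O(\sqrt{\E(s)}))$) recovers $\Phi_Q(V)$. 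This window is much wider than your $1/s_*$ --- which is exactly why the theorem only claims a relative error $O(\sqrt{\E(Z)})$ in the exponent rather than an asymptotic for $\Phi_Q(V)$ itself --- and it is robust to discreteness. If you want to salvage your tilting route, widen your window to $\Delta'\asymp\sqrt{\E(Z)}(\log Z)^{A-1}$ (the loss $e^{-s_*\Delta'}$ is then exactly of the permitted size) so that only a Chebyshev-type anticoncentration statement is needed; but you would still have to produce an admissible estimate for the tilted mean, which brings you back to something like the paper's two-sided comparison.
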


\begin{proof}[Proof of Theorem \ref{Main*}]
We shall use the saddle-point method to prove Theorem \ref{Main*}. For $V\geq 1$, we define  
\begin{equation}\label{Def s}
s(V):=e^{-\eta_{\Y}-1} Z(V).
\end{equation}
 The definition of $Z$ implies that $H(Z)+O(1/Z)=V\leq R$. Therefore by inserting the estimate for $H(Z)$ in Lemma \ref{LogSum} into
 \eqref{DefRLQ1} we deduce that 
 \[
 s(V)\ll \frac Q{\log Q},
 \]
 and so for any $s\asymp s(V)$ we have
\begin{equation}\label{EstimateKQS}
K_Q(s)= sH(s)+\Big(\alpha \eta_{\Y} +O\big(\E(s(V))\big)\Big)\,   s  \mathcal L^{A-1} 
\end{equation}
by  Proposition \ref{Cumulant}, where for the rest of this proof we write $\mathcal L:=\log s(V)$.

For $u>0$ we have 
\begin{equation}\label{IntDistrib}
\exp(K_Q(u))= -\int_{-\infty}^{\infty} e^{ut} d\Phi_Q(t)= u\int_{-\infty}^{\infty} e^{ut} \Phi_Q(t)dt.
\end{equation}
We will now show (in \eqref{BulkDistrib} below) that for $u=s$ by far the largest part of the integral on the right-hand side occurs for $t$ in a short interval around $s$:

Let $0<\delta<1$ be a small parameter to be chosen, and put $S_1=e^{\delta}s$, writing $s=s(V)$. Let $\Delta=\delta\alpha \mathcal L^{A-1}$. Then, it follows from \eqref{EstimateKQS}  and \eqref{IntDistrib} that 
\begin{equation}\label{CutInt}
\begin{aligned}
\int_{V+\Delta} ^{\infty} e^{st} \Phi_Q(t)dt &\leq \exp((s-S_1)(V+\Delta)) \int_{V+\Delta}^{\infty} e^{S_1t} \Phi_Q(t)dt\\
& \leq \exp\Big((s-S_1)(V+\Delta)+ S_1 H(S_1) + (\alpha \eta_{\Y} +O(\E(s))) S_1 \mathcal L^{A-1} \Big)  .\\
\end{aligned}
\end{equation}
Now, by Lemma \ref{DiffL} we have 
$$ 
S_1 H(S_1) + \alpha\eta_{\Y} S_1 \mathcal L^{A-1} = e^{\delta}\left(sH(s)+ \alpha\eta_{\Y} s \mathcal L^{A-1}\right)+ \alpha e^{\delta}  \delta s  \mathcal L^{A-1} + O(s \mathcal L^{A-1}\E(s) ).
$$
Furthermore, by Lemma \ref{DiffL} we have
\begin{equation}\label{estimateV}
V= H(Z)+O\left(\frac{1}{Z}\right)= H(s)+ (\alpha (\eta_{\Y}+1)+O(\E(s))) \mathcal L^{A-1} .
\end{equation}
Inserting these estimates in \eqref{CutInt}  we get
$$ \int_{V+\Delta} ^{\infty} e^{st} \Phi_Q(t)dt \leq \exp\Big(sH(s) + \alpha \eta_{\Y} s\mathcal L^{A-1}+ (1+\delta-e^{\delta})\alpha s\mathcal L^{A-1} +O(s \mathcal L^{A-1}\E(s)) \Big).$$
Therefore, choosing $\delta=B_0 \sqrt{\E(s)}$, for a suitably large constant $B_0$, and using \eqref{EstimateKQS}, we deduce that 
\begin{equation}\label{TailInt1}
\int_{V+\Delta} ^{\infty} e^{st} \Phi_Q(t)dt \leq e^{-s \mathcal L^{A-1}\E(s)} \int_{-\infty} ^{\infty} e^{st} \Phi_Q(t)dt.
\end{equation}
A similar calculation shows that by putting $S_2=e^{-\delta}s$, we obtain
\begin{align*}
\int_{-\infty} ^{V-\Delta} e^{st} \Phi_Q(t)dt &\leq \exp((s-S_2)(V-\Delta)) \int_{-\infty}^{V-\Delta} e^{S_2t} \Phi_Q(t)dt\\
&\leq  \exp\Big((s-S_2)(V-\Delta)+ S_2 H(S_2) + (\alpha\eta_{\Y} +O(\E(s))) S_2 \mathcal L^{A-1} \Big) \\
&=\exp\Big( sH(s) + \alpha \eta_{\Y} s\mathcal L^{A-1} + (1-\delta-e^{-\delta})\alpha s\mathcal L^{A-1} +O(s \mathcal L^{A-1}\E(s))\Big),
\end{align*}
and hence by our choice of $\delta$ and \eqref{EstimateKQS} we deduce
\begin{equation}\label{TailInt2}
\int_{-\infty} ^{V-\Delta} e^{st} \Phi_Q(t)dt \leq e^{-s \mathcal L^{A-1}\E(s)} \int_{-\infty} ^{\infty} e^{st} \Phi_Q(t)dt.
\end{equation}
Combining the bounds \eqref{TailInt1} and \eqref{TailInt2} with \eqref{EstimateKQS} and \eqref{IntDistrib}  gives 
\begin{equation}\label{BulkDistrib}
\int_{V-\Delta} ^{V+\Delta} e^{st} \Phi_Q(t)dt= \exp\Big(sH(s) + \alpha \eta_{\Y} s\mathcal L^{A-1}  \left(1+O(\E(s))\right)\Big),
\end{equation}
as claimed.

 Now $\Phi_Q(t)$ is non-increasing as a function of $t$, by definition, and so we may bound the above integral as follows 
 $$ e^{sV+O(s\Delta)} \Phi_Q(V+\Delta)\leq \int_{V-\Delta} ^{V+\Delta} e^{st} \Phi_Q(t)dt \leq e^{sV+O(s\Delta)} \Phi_Q(V-\Delta).$$
Inserting these bounds in \eqref{BulkDistrib} and using \eqref{estimateV} we obtain
\begin{equation}\label{BDelta}
\Phi_Q(V+\Delta) \leq  \exp\left(-\alpha s \mathcal L^{A-1}\left(1+ O\left(\sqrt{\E(s)}\right)\right)\right)  \leq \Phi_Q(V-\Delta).
\end{equation}
Let $s^+$ and $s^-$ be defined by $s^{\pm}=  e^{-\eta_{\Y}-1} Z(V\pm \Delta)$. Then, it follows from two applications of Lemma \ref{DiffL} that
\begin{align*}
V\pm\Delta& =H(Z(V\pm \Delta)) +O\left(1\right) \\
& = H(s^{\pm}) + \alpha (\eta_{\Y}+1) (\log s^{\pm})^{A-1} + O\left((\log s^{\pm})^{ \max\{A-2, A-B\}}\right) \\
& = H(s) +\alpha t^{\pm} \mathcal L^{A-1}+ \alpha (\eta_{\Y}+1) (\log s +t^{\pm})^{A-1} + O\left((\log s^{\pm})^{ \max\{A-2, A-B\}}\right)
\end{align*}
where $t^{\pm}= \log s^{\pm}-\log s.$ By \eqref{estimateV}  we deduce that
$$
 \pm \Delta=  \alpha t^{\pm} \mathcal L^{A-1} + \alpha(\eta_{\Y}+1) \big((\log s +t^{\pm} )^{A-1}- \mathcal L^{A-1}\big) + O\left(\mathcal{E}(s^{+})(\log s^{+})^{A-1}\right).
 $$
Since $\Delta=o(\mathcal L^{A-1})$ this implies that $t^{\pm}=o(1)$, and so 
$$ \alpha t^{\pm} \mathcal L^{A-1}=\pm \Delta + O(\mathcal{E}(s)\mathcal L^{A-1}).$$
Since $\Delta=\delta\alpha \mathcal L^{A-1}$, this implies that 
$  t^{\pm} =\pm\delta  + O(\E(s)) \ll \sqrt{\E(s)}$, and thus
\begin{equation}\label{SPlus}
s^{\pm} (\log s^{\pm})^{A-1}= s \mathcal L^{A-1}\left(1+ O\left(\sqrt{\E(s)}\right)\right).
\end{equation}
Finally, using the upper bound of \eqref{BDelta} with $V+\Delta$ instead of $V$ together with this last estimate gives
$$ \Phi_Q(V) \geq \exp\left(-\alpha s^+ (\log s^+)^{A-1}\left(1+ O\left(\sqrt{\E(s^+)}\right)\right)\right)= \exp\left(-\alpha s\mathcal L^{A-1}\left(1+ O\left(\sqrt{\E(s)}\right)\right)\right).$$
A similar lower bound follows from the lower of \eqref{BDelta} with $V-\Delta$ instead of $V$, together with \eqref{SPlus}.
\end{proof}


\begin{proof}  [Deduction of Theorem \ref{cor: Main2*}]
To deduce Theorem \ref{cor: Main2*} from Theorem \ref{Main*}, we  express $Z=Z(V)$ as an explicit function in terms of $V$, using 
the definition of $Z$ and Lemma \ref{LogSum}:
If $A>0$ then we have
$$
V-C_{\mathcal A}= \frac{\alpha}{A}(\log Z)^{A} \left(1- \frac{A\log \beta}{\log Z}+ O\left((\log Z)^{ \max\{-2, -B\}}\right)\right).
$$
Re-arranging this gives
\begin{align*}
W&= \log Z \left(1- \frac{A\log \beta}{\log Z}+O\left((\log Z)^{ \max\{-2, -B\}}\right)\right)^{1/A}\\
&= \log Z -\log \beta+O\left((\log Z)^{ \max\{-1, 1-B\}}\right),
\end{align*}
and hence, by exponentiating, we obtain 
\begin{equation}\label{sCase1}
Z= \beta e^W\left(1+O\left(W^{\max\{-1 ,  1-B \}}\right)\right).
\end{equation}
If $A=0$ then, analogously,
$$ V-C_{\mathcal A}=\alpha \log \log  Z - \frac{\alpha\log \beta}{\log Z} + O\left(\frac{1}{(\log Z)^{ \min\{2, B\}}}\right)$$
so that 
\begin{align*}
W
&= (\log Z) \exp\left(-\frac{\log \beta}{\log Z}+ O\left(\frac{1}{(\log Z)^{ \min\{2, B\}}}\right)\right)\\
&= \log Z -\log \beta+O\left(\frac{1}{(\log Z)^{ \min\{1, B-1\}}}\right),
\end{align*}
and hence \eqref{sCase1} holds by exponentiating.

The estimate claimed for $ \Phi_Q(V)$ now follows by substituting \eqref{sCase1}  into the estimate  for $ \Phi_Q(V)$ in Theorem \ref{Main*}.

We also deduce the range for $W$ from the range for $V$ in Theorem \ref{Main*} by an argument similar to that above, with any $\Theta'> \Theta/\alpha$.
\end{proof}

 
\section{Using the AIH Hypothesis: Proofs of Theorems \ref{cor: Main2} and \ref{LimitLaw}}

We first start by using the Approximate Independence Hypothesis to relate the moment generating function of $H_{\X}(s)$ to that of $H_{\Y}(s)=\sum_{q_n\leq Q}  \Yn/q_n$, where $\Y(n)$ are independent random variables. 

\begin{pro}\label{LaplaceTransformQ} Assume AIH$(L,Q)$. For all complex numbers $s$ such that $|s|\leq L/(10 H(Q))$, we have 
$$\ex \left(e^{sH_{\X}(Q)}\right)= \ex \left(e^{sH_{\Y}(Q)}\right) +O\left(e^{-L/2}\right).$$
\end{pro}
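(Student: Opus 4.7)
The plan is to Taylor-expand both $\ex(e^{sH_{\X}(Q)})$ and $\ex(e^{sH_{\Y}(Q)})$ in $s$ and compare them termwise, applying AIH$(L,Q)$ to the head of the series (degrees $\ell \leq L$) and using pointwise boundedness for the tail (degrees $\ell > L$). Expanding the exponential and then $H_{\X}(Q)^\ell = \sum \X(n_1)\cdots\X(n_\ell)/(q_{n_1}\cdots q_{n_\ell})$, we get
\[
\ex(e^{sH_{\X}(Q)}) = \sum_{\ell \geq 0} \frac{s^\ell}{\ell!}\sum_{q_{n_1},\ldots,q_{n_\ell} \leq Q} \frac{\ex(\X(n_1)\cdots\X(n_\ell))}{q_{n_1}\cdots q_{n_\ell}},
\]
and likewise for $\Y$, so the two expansions match up termwise.

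For the head $0 \leq \ell \leq L$, AIH$(L,Q)$ controls the difference of mixed moments pointwise by $O(e^{-L})$; summing over the $\ell$-tuples of indices (each weighted so that the total weight is $H(Q)^\ell$) gives $|\ex(H_{\X}(Q)^\ell) - \ex(H_{\Y}(Q)^\ell)| \ll e^{-L} H(Q)^\ell$. Hence the head contributes at most
\[
e^{-L} \sum_{\ell=0}^{L} \frac{(|s| H(Q))^\ell}{\ell!} \leq e^{-L} \exp(|s| H(Q)) \leq e^{-L} e^{L/10} = e^{-9L/10}
\]
to the difference of Laplace transforms, using the hypothesis $|s| \leq L/(10 H(Q))$. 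This is comfortably $O(e^{-L/2})$.

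For the tail $\ell > L$, we use that $|\X(n)|, |\Y(n)| \leq 1$ (as is implicit in the setup and holds in every application), which gives deterministic bounds $|H_{\X}(Q)|, |H_{\Y}(Q)| \leq H(Q)$ and hence $|\ex(H_{\X}(Q)^\ell)|, |\ex(H_{\Y}(Q)^\ell)| \leq H(Q)^\ell$. By Stirling $\ell! \geq (\ell/e)^\ell$, each remaining term satisfies
\[
\frac{(|s|H(Q))^\ell}{\ell!} \leq \left(\frac{e|s|H(Q)}{\ell}\right)^{\!\ell} \leq \left(\frac{e}{10}\right)^{\!\ell}
\]
for $\ell > L \geq 10|s|H(Q)$, so the geometric sum yields an $O((e/10)^L)$ tail bound, which is also comfortably $O(e^{-L/2})$. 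Adding the head and tail contributions yields the proposition.

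The argument is essentially bookkeeping, and the only subtlety lies in the choice of threshold: $|s| \leq L/(10 H(Q))$ is calibrated so that both the head's blow-up factor $e^{|s|H(Q)}$ (which must be dominated by $e^L$ to survive the $e^{-L}$ savings from AIH) and the tail's geometric ratio $e|s|H(Q)/\ell$ (which must stay bounded away from $1$ for $\ell > L$) simultaneously lie safely below what would spoil an $e^{-L/2}$ final bound; the factor $1/10$ gives plenty of slack on both sides.
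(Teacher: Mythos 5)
Your proposal is correct and follows essentially the same route as the paper: truncate the Taylor expansion at degree $L$, apply AIH$(L,Q)$ termwise to the head to get $|\ex(H_{\X}(Q)^{\ell})-\ex(H_{\Y}(Q)^{\ell})|\ll e^{-L}H(Q)^{\ell}$ and hence a contribution $\ll e^{-L}e^{|s|H(Q)}\leq e^{-L/2}$, and bound the tail via $|H_{\X}(Q)|,|H_{\Y}(Q)|\leq H(Q)$ together with Stirling and the hypothesis $|s|\leq L/(10H(Q))$. The paper makes the same implicit use of $|\X(n)|\leq 1$ that you flag, so there is nothing to add.
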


\begin{proof}
First note that 
\begin{equation}\label{FirstApprox}
\ex \left(e^{sH_{\X}(Q)}- e^{sH_{\Y}(Q)}\right)= \sum_{k\leq L} \frac{s^k}{k!} \
\ex\left( H_{\X}(Q)^k -H_{\Y}(Q)^k   \right)+ \text{Err},
\end{equation}
where, since $|H_{\X}(Q)|, |H_{\Y}(Q)|\leq H(Q)$, we have
$$ \text{Err} \leq 2\sum_{k> L} \frac{|s|^kH(Q)^k}{k!} \ll \sum_{k>L}\left(\frac{3|s| H(Q)}{L}\right)^k\ll e^{-L},
$$
by Stirling's formula and our assumption on $s$. We now expand the moments on the right hand side of \eqref{FirstApprox}, and use the Approximate Independence Hypothesis to get
\[
\ex\left( H_{\X}(Q)^k -H_{\Y}(Q)^k   \right)
=\sum_{q_{n_1}, \dots, q_{n_k}\leq Q} \frac{\ex\big(\X(n_1)\cdots \X(n_k)-\Y(n_1)\cdots \Y(n_k)\big)}{q_{n_1}\cdots q_{n_k}} 
  \ll H(Q)^k e^{-L}.
\]
Inserting this estimate into \eqref{FirstApprox} allows us to bound the sum on the right-hand side by 
$$ 
\ll e^{-L}. \sum_{k\leq L} \frac{(|s|H(Q))^k}{k!} \leq e^{|s| H(Q)-L}\leq e^{-L/2},
$$
by our assumption on $s$, and the result follows.
\end{proof}

Combining this result with Corollary \ref{Cor.cumulant}, we prove that the characteristic function of $H_{\X}(Q)$ converges, as $Q\to\infty$, to the characteristic function of some random variable.

\begin{pro}\label{CharLimit} Assume AIH$(L,Q)$ and CRVH$(\Y)$.
There exists a function $\varphi(t)$ such that $\ex(e^{itH_{\X}(Q)})$ converges pointwise to $\varphi(t)$ on $\mathbb{R}$. Moreover, for all real numbers $|t|\leq \min\Big( L/(10H(Q)), Q^{1/2}(\log Q)^{-\frac{A-1}{2}}\Big)$ we have 
\begin{equation}\label{ConvChar}
\ex(e^{itH_{\X}(Q)})= \varphi(t) + O\left(e^{-L/2}+   \frac{t^2(\log Q)^{A-1}}{Q}\right)
\end{equation} 
(so that $\ex(e^{itH_{\X}(Q)})$ converges uniformly to $\varphi(t)$ on any finite interval).
Furthermore, the function $\varphi$ satisfies
\begin{equation}\label{DecayChar}
 \varphi(t) \ll \exp\left(-C_0 |t|(\log |t|)^{A-2}\right), 
 \end{equation}
for all real numbers $|t|\geq 2$ where $C_0$ is an absolute constant. 
\end{pro}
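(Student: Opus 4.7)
The plan is to deduce the convergence statements from Proposition~\ref{LaplaceTransformQ} and Corollary~\ref{Cor.cumulant}, and then to prove the decay bound by a direct estimate on $\re K(it)$. For the first part, applying Proposition~\ref{LaplaceTransformQ} with $s=it$ gives
\[
\ex(e^{itH_{\X}(Q)}) = \exp(K_Q(it)) + O(e^{-L/2}) \quad \text{for } |t|\leq L/(10H(Q)),
\]
while Corollary~\ref{Cor.cumulant} yields $K_Q(it) = K(it) + O(t^2(\log Q)^{A-1}/Q)$ for $Q>|t|$. Setting $\varphi(t):=\exp(K(it))$, the elementary inequality $|e^{z_1}-e^{z_2}|\leq|z_1-z_2|$ (valid when $\re z_1,\re z_2\leq 0$, which holds here since $\exp(K_Q(it))$ and $\exp(K(it))$ are characteristic functions of modulus at most $1$) converts the additive error in $K_Q(it)$ into the one in \eqref{ConvChar}, and pointwise convergence on $\mathbb{R}$ follows as $Q\to\infty$.

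For the decay bound \eqref{DecayChar}, I would estimate
\[
\log|\varphi(t)| = \re K(it) = \sum_n \log|\ex(e^{it\Y(n)/q_n})|,
\]
every summand being nonpositive. The key uniform estimate I aim for is
\[
\log|\ex(e^{iu\Y(n)})| \leq -c_1 u^2 \qquad (0\leq u\leq 1),
\]
for some absolute $c_1>0$. This would follow from the identity
\[
1-|\ex(e^{iu\Y(n)})|^2 = 2\,\ex\bigl(\sin^2(u(\Y(n)-\Y'(n))/2)\bigr)
\]
(with $\Y'(n)$ an independent copy), together with $\sin^2 x\geq x^2/4$ for $|x|\leq 1$, which gives $1-|\ex(e^{iu\Y(n)})|^2\geq \tfrac{u^2}{2}\ex(\Y(n)^2)$. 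The one technical step is to show $\ex(\Y(n)^2)\gg 1$ uniformly in $n$: hypothesis \eqref{LowerBoundf} at $t=2$ yields $\pr(\Y(n)>1/2)\gg 1$, so $\ex(\Y(n)\mathbf{1}_{\Y(n)>1/2})\gg 1$; since $\ex(\Y(n))=0$ this forces $|\ex(\Y(n)\mathbf{1}_{\Y(n)\leq 1/2})|\gg 1$, and Cauchy--Schwarz on the left-hand side concludes.

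To finish, for $|t|\geq 2$ I would keep only those indices with $|t|\leq q_n\leq 2|t|$, where $u=t/q_n\in[1/2,1]$, so each such summand contributes at most $-c_1/4$. By \eqref{A-distn} there are $\gg|t|(\log|t|)^{A-1}$ such indices, hence
\[
\re K(it)\leq -c_2\,|t|(\log|t|)^{A-1}\leq -C_0\,|t|(\log|t|)^{A-2}\qquad (|t|\geq 2),
\]
which gives \eqref{DecayChar}. The main technical obstacle in the argument is really just the uniform lower bound on $\ex(\Y(n)^2)$, which is precisely what hypothesis \eqref{LowerBoundf} is designed to provide; interestingly, the proposition's exponent $(\log|t|)^{A-2}$ appears to be a slight loss compared to the strength of the estimate above.
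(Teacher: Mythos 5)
Your argument is correct, and the first half (defining $\varphi(t)=e^{K(it)}$ and combining Proposition \ref{LaplaceTransformQ} with Corollary \ref{Cor.cumulant}) is essentially identical to the paper's proof; the inequality $|e^{z_1}-e^{z_2}|\leq |z_1-z_2|$ for $\re z_1,\re z_2\leq 0$ plays the same role as the paper's step $\exp(K_Q(it)+O(E))=\ex(e^{itH_{\Y}(Q)})+O(E)$, which uses that characteristic functions have modulus at most $1$.

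For the decay bound \eqref{DecayChar} you take a genuinely different route. The paper keeps the factors with $|t|\log|t|<q_n<t^2$, so that $u=|t|/q_n<1/\log|t|$ is small enough for the Taylor expansion $\ex(e^{iu\Yn})=\exp(-\tfrac{u^2}{2}\ex(\Yn^2)+O(u^3))$ to have a controlled error, and then sums $t^2\sum q_n^{-2}\gg |t|(\log|t|)^{A-2}$. You instead use the symmetrization identity $1-|\ex(e^{iu\Yn})|^2=2\,\ex(\sin^2(u(\Yn-\Y'(n))/2))$, which is valid for all $u\in[0,1]$ with no error term to control, and this lets you keep the factors with $q_n\in[|t|,2|t|]$, where each contributes a constant amount; counting these indices via \eqref{A-distn} gives the stronger exponent $|t|(\log|t|)^{A-1}$ (your remark that this improves on $(\log|t|)^{A-2}$ is correct, though either bound suffices for the application to absolute continuity of $\mathbb{W}$, and for $2\leq |t|\ll 1$ both statements are trivial by adjusting constants). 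Both arguments rest on the same input $\ex(\Yn^2)\gg 1$, which the paper simply asserts as a consequence of \eqref{LowerBoundf}; your derivation via $t=2$ is fine, though you could shortcut it with $\ex(\Yn^2)\geq \tfrac14\pr(\Yn>1/2)\gg 1$ directly. One cosmetic point: your intermediate constant should be $\tfrac{u^2}{4}\ex(\Yn^2)$ rather than $\tfrac{u^2}{2}\ex(\Yn^2)$ with the bound $\sin^2x\geq x^2/4$, but this changes nothing.
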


\begin{proof}
Let $\varphi(t)=e^{K(it)}$, where $K(s)=\lim_{Q\to \infty} K_Q(\Y,s)$ from Corollary \ref{Cor.cumulant}. Then, it follows from this result that for $|t|\leq Q^{1/2}(\log Q)^{-\frac{A-1}{2}}$ we have
\begin{equation}\label{ApproxChar}
\begin{aligned}
 \varphi(t)&= \exp\left(K_Q(\Y,it)+O\left(\frac{t^2(\log Q)^{A-1}}{Q}\right)\right)\\
 & = \ex(e^{itH_{\Y}(Q)}) \left(1+O\left(\frac{t^2(\log Q)^{A-1}}{Q}\right)\right)\\
 &= \ex(e^{itH_{\Y}(Q)}) + O\left(\frac{t^2(\log Q)^{A-1}}{Q}\right).
\end{aligned} 
\end{equation}
Combining this estimate with Proposition \ref{LaplaceTransformQ} gives \eqref{ConvChar}. 

Hence, it remains to prove the bound \eqref{DecayChar}. We will prove that this bound is verified by $\ex(e^{itH_{\Y}(Q)})$, and then use \eqref{ApproxChar} to deduce the same bound for $\varphi(t)$. Let $|t|\geq 2$ be a real number, and assume that $Q$ is large so that $Q> t^2$. Since the $\Y(n)$ are independent we have 
$$ 
\left|\ex(e^{itH_{\Y}(Q)})\right|=\bigg|\prod_{q_n\leq Q}\ex\left(e^{it\Y(n)/q_n}\right)\bigg| \leq \prod_{n:\ |t|\log |t|<q_n<t^2}\bigg|\ex\left(e^{it\Y(n)/q_n}\right)\bigg|.
$$
Now, since $q_n>|t|$, we can use the Taylor expansion of the exponential to get 
\begin{align*}
\ex\left(e^{it\Y(n)/q_n}\right)
&= \ex\left(1+ \frac{it}{q_n} \Y(n)- \frac{t^2}{2q_n^2}\Y(n)^2 + O\left(\frac{t^3}{q_n^3}\right)\right)\\
&= 1- \frac{t^2}{2q_n^2}\ex(\Y(n)^2)+O\left(\frac{t^3}{q_n^3}\right) \\
& = \exp\bigg( - \frac{t^2}{2q_n^2}\ex(\Y(n)^2)+O\left(\frac{t^3}{q_n^3}\right) \bigg) 
\end{align*}
as each $|\Y(n)|\leq 1$.
Furthermore, it follows from \eqref{LowerBoundf} that there exists an absolute constant $c_1>0$ such that $\ex(\Y(n)^2)\geq c_1$ for all $n\geq 1$. Therefore, we deduce that 
$$ \left|\ex(e^{itH_{\Y}(Q)})\right| \leq \exp\left(-\frac{c_1}2t^2\sum_{|t|\log |t|<q_n<t^2} \frac{1}{q_n^2} + O\left(t^3\sum_{|t|\log |t|<q_n<t^2} \frac{1}{q_n^3}\right)\right).$$
The asymptotic for $\mathcal{A}(x)$ now implies that 
$$ \sum_{|t|\log |t|<q_n<t^2} \frac{1}{q_n^2}\gg \frac{(\log |t|)^{A-2}}{|t|} \text{ and } \sum_{|t|\log |t|<q_n<t^2} \frac{1}{q_n^3}\ll \frac{(\log |t|)^{A-3}}{t^2},$$
and so
$$\ex(e^{itH_{\Y}(Q)}) \ll \exp\left(-C_0 |t|(\log |t|)^{A-2}\right), $$
for some positive constant $C_0$. By letting $Q\to \infty$ in \eqref{ApproxChar} one deduces that the same estimate holds for $\varphi(t)$, which completes the proof.
\end{proof}

We now deduce Theorem \ref{LimitLaw}.
\begin{proof}[Proof of Theorem \ref{LimitLaw}]
First, note that $\ex(e^{itH_{\X}(Q)})$ converges uniformly to $\varphi(t)$ on any finite interval by \eqref{ConvChar}. This shows that $\varphi(t)$ is continuous at $t=0$ since $\ex(e^{itH_{\X}(Q)})$  is continuous at $t=0$. Therefore, it follows from L\'evy's continuity theorem that $\varphi(t)$ is the characteristic function of some random variable $\mathbb{W}$, and that $H_{\X}(Q)$ converges in distribution to $\mathbb{W}$. Furthermore, since $\varphi(t)= \ex(e^{it\mathbb{W}})$ is rapidly decreasing by \eqref{DecayChar}, then $\mathbb{W}$ is absolutely continuous and has a bounded probability density function on $\mathbb{R}$. Therefore, it follows from the Berry-Esseen inequality (see \S II.7.6 of \cite{Te}) that for all $T>0$ we have
$$  \sup_{V\in \mathbb{R}} \big|\Phi_Q(\X, V)- \pr(\mathbb{W}>V)\big| \ll \frac{1}{T} +\int_{-T}^T \frac{|\ex(e^{itH_{\X}(Q)})-\varphi(t)|}{t} dt.$$ 
Finally, choosing $T= \min\big(L/(10H(Q)), Q^{1/3}(\log Q)^{-(A-1)/3}\big)$ and using \eqref{ConvChar} gives
$$ \sup_{V\in \mathbb{R}} \big|\Phi_Q(\X, V)- \pr(\mathbb{W}>V)\big| \ll \frac{1}{T}+ \frac{T^2(\log Q)^{A-1}}{Q}\ll \frac{1}{T}\ll \frac{H(Q)}{L}+  \frac{(\log Q)^{(A-1)/3}}{Q^{1/3}},$$
as desired.
\end{proof}

 
We now prove the analogue of Theorem \ref{Main*} in the case of a sum of approximately independent random variables. Since the proof is similar, we shall only indicate the main differences that occur.

\begin{thm}\label{thm: Main}
Let $Q$ be large. Suppose that the random variables $\{\X(n)\}_{q_n\leq Q}$  satisfy AIH$(L,Q)$, approximated by the independent random variables $\{\Yn\}_{q_n\leq Q}$, with $(\log Q)^{2A^2} \leq L \leq \mathcal A(Q)$. Moreover suppose that the $\{\Yn)\}_{q_n\leq Q}$ satisfy CRVH$(\Y)$. Suppose that $V$ is in the range
\begin{equation}\label{DefRLQ}
c\leq V\leq R(L, Q):= \begin{cases} \frac{\alpha}{A} (\log L)^{A} +C_{\mathcal A}- \big( \alpha A \log\log Q +\Theta\big) (\log L)^{A-1}  & \text { if } A>0, \\
  \alpha \log\log L +C_{\mathcal A} -(\alpha \log \log\log Q +\Theta)/\log L & \text { if } A=0, \\
 \end{cases}
 \end{equation}
 where $\Theta>0$ and $c>1$ are suitably large constants.  Then
 $$ \Phi_Q(V)= \exp\left(-\alpha e^{-\eta_{\Y}-1} Z (\log Z)^{A-1} \left(1+O\left(\sqrt{\E(Z)}\right)\right)\right),$$
 where $\E(Z)$ is defined in \eqref{EpsilonS}.
\end{thm}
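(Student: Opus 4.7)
The plan is to run the saddle-point argument of Theorem \ref{Main*} essentially verbatim, inserting Proposition \ref{LaplaceTransformQ} at every point where the moment generating function of $H_{\Y}(Q)$ must be transferred to that of $H_{\X}(Q)$. I will take the saddle point $s = s(V) = e^{-\eta_{\Y}-1} Z(V)$ as in \eqref{Def s}, together with the shifted values $S_1 = e^{\delta} s$ and $S_2 = e^{-\delta} s$ where $\delta = B_0 \sqrt{\E(s)}$, exactly as in the proof of Theorem \ref{Main*}.

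The first and most delicate step will be to verify that $s, S_1, S_2$ all satisfy the hypothesis $|u| \leq L/(10\, H(Q))$ of Proposition \ref{LaplaceTransformQ}. Inverting the relation $V = H(Z(V)) + O(1/Z)$ via Lemma \ref{LogSum} (as in \eqref{sCase1}), the upper bound $V \leq R(L,Q)$ in \eqref{DefRLQ} translates to $\log Z(V) \leq \log L - A \log\log Q - O(1)$ when $A > 0$, and to an analogous bound with $A\log\log Q$ replaced by $\log\log\log Q$ when $A = 0$. Combined with the asymptotic for $H(Q)$ from Lemma \ref{LogSum}, this yields $s\, H(Q) \leq L/10$ with room to spare; the factor $A$ in front of $\log\log Q$ in \eqref{DefRLQ} arises precisely from this calibration.

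With the hypothesis verified, Proposition \ref{LaplaceTransformQ} gives $\ex(e^{u H_{\X}(Q)}) = \ex(e^{u H_{\Y}(Q)}) + O(e^{-L/2})$. Since $\ex(e^{u H_{\Y}(Q)}) = \exp(K_Q(\Y, u)) \geq 1$ by Jensen's inequality (using $\ex(H_{\Y}(Q)) = 0$), taking logarithms and writing $K_Q(\X, u) := \log \ex(e^{u H_{\X}(Q)})$ yields
\[ K_Q(\X, u) = K_Q(\Y, u) + O(e^{-L/2}) \qquad \text{uniformly for } u \in [S_2, S_1]. \]
Because $L \geq (\log Q)^{2A^2}$, the error $e^{-L/2}$ is super-polynomially small in $\log Q$, hence absorbed by the $O(\E(s) \cdot s \mathcal{L}^{A-1})$ term already present in Proposition \ref{Cumulant} (with $\mathcal{L} := \log s(V)$). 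Consequently the asymptotic $K_Q(\X, u) = u H(u) + (\alpha \eta_{\Y} + O(\E(s))) u \mathcal{L}^{A-1}$ holds throughout $[S_2, S_1]$.

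From here the rest is a line-by-line transcription of the proof of Theorem \ref{Main*}, with $\Phi_Q(\Y, \cdot)$ replaced by $\Phi_Q(\X, \cdot)$. Using the identity $\exp(K_Q(\X, u)) = u \int_{-\infty}^{\infty} e^{ut} \Phi_Q(\X, t)\, dt$, I will bound the tails $\int_{V + \Delta}^{\infty} e^{st} \Phi_Q(\X, t)\, dt$ and $\int_{-\infty}^{V - \Delta} e^{st} \Phi_Q(\X, t)\, dt$ (with $\Delta = \delta\, \alpha\, \mathcal{L}^{A-1}$) by shifting the saddle to $S_1$ and $S_2$ respectively; the choice of $\delta$ makes the shortfalls $(1 + \delta - e^\delta)\, \alpha\, s\, \mathcal{L}^{A-1}$ and $(1 - \delta - e^{-\delta})\, \alpha\, s\, \mathcal{L}^{A-1}$ dominate every error. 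The central window is then squeezed by the monotonicity of $\Phi_Q(\X, \cdot)$, and Lemma \ref{DiffL} together with the analogue of \eqref{SPlus} removes the $\pm \Delta$ shift, delivering the claimed estimate. The only genuinely non-routine point is the calibration of the range \eqref{DefRLQ}; once that is done, Proposition \ref{LaplaceTransformQ} slots into the Theorem \ref{Main*} argument with no further structural changes.
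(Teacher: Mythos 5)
Your proposal is correct and follows the paper's own proof essentially verbatim: both arguments reduce the theorem to checking that the range \eqref{DefRLQ} forces $Z(V)\ll L/(\log Q)^{A}$ (resp.\ $L/\log\log Q$ when $A=0$), hence $s(V)\ll\min\{Q/\log Q,\,L/(10H(Q))\}$, so that Propositions \ref{LaplaceTransformQ} and \ref{Cumulant} both apply, after which the saddle-point argument of Theorem \ref{Main*} is repeated unchanged. Your explicit observation that the additive $O(e^{-L/2})$ error transfers to the cumulant generating function because $\ex(e^{uH_{\Y}(Q)})\geq 1$, and is then absorbed into the $O(\E(s)\,s\,(\log s)^{A-1})$ error of Proposition \ref{Cumulant}, is a detail the paper leaves implicit but is exactly the intended justification.
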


\begin{proof} 
We modify the proof of  Theorem \ref{Main*}. The key issue is that we wish to employ Proposition \ref{LaplaceTransformQ} which requires that $s\ll L/H(Q)$. 

Now  $ L\leq \mathcal A(Q)\sim \alpha Q(\log Q)^{A-1}$, and 
\[
H(Q)= \sum_{q_n\leq Q}\frac{1}{q_n}\sim  \begin{cases} \frac{\alpha}{A} (\log Q)^{A} & \text{ if } A>0, \\
\alpha \log\log Q & \text{ if } A=0, \end{cases}
\]
by Lemma \ref{LogSum}. Therefore $\tfrac L{H(Q)} \leq (A+o(1)) \tfrac Q{\log Q}$, and so
\begin{equation}\label{RangeS}
\min\left\{\frac{Q}{\log Q}, \frac{L}{10H(Q)}\right\} \gg \frac{L}{H(Q)} \gg \begin{cases} \frac{L}{(\log Q)^{A}} \text{ if } A>0, \\ 
\frac{L}{\log\log  Q} \text{ if } A=0. \end{cases}
\end{equation}

By hypothesis we have $V\leq R(L,Q)$, so that $H(Z)\leq R(L,Q)+O(1/Z)$. We can again use Lemma \ref{LogSum}  to evaluate $H(Z)$, and using the definition of $R(L, Q)$ in \eqref{DefRLQ} we deduce that 
if $A>0$ then $Z\ll L/(\log Q)^{A}$;
and if $A=0$ then $Z\ll L/\log\log Q$.   We deduce from \eqref{RangeS} that, in either case,
\[
s(V)\ll Z\ll \min\left\{\frac{Q}{\log Q}, \frac{L}{10H(Q)}\right\} ,
\]
and we can therefore apply  Proposition \ref{Cumulant} for any $s\asymp s(V)$, and we complete the proof here, exactly as in the proof
of Theorem \ref{Main*}.
\end{proof}

\begin{proof}[Proof of Theorem \ref{cor: Main2}]
We deduce Theorem \ref{cor: Main2} from Theorem \ref{thm: Main}, exactly as we 
 deduce Theorem \ref{cor: Main2*} from Theorem \ref{Main*}. The only difference is determining the range for $W$ which follows from the analogous calculation.
 \end{proof}





\begin{ack} 
Thanks to Nadia Sidorova for useful conversations and one of the referees for a very careful reading of the paper which highlighted some necessary minor corrections.
\end{ack} 

\bibliographystyle{plain}

\end{document}